\begin{document}

\setcounter{page}{1} \setcounter{section}{0}
\newtheorem{theorem}{Theorem}
\newtheorem{lemma}[theorem]{Lemma}
\newtheorem{corollary}[theorem]{Corollary}
\newtheorem{proposition}[theorem]{Proposition}
\newtheorem{observation}[theorem]{Observation}
\newtheorem{definition}[theorem]{Definition}
\newtheorem{claim}{Claim}
\newtheorem{conjecture}[theorem]{Conjecture}
\newtheorem{problem}[theorem]{Problem}

\title{On Galvin orientations of line graphs and list-edge-colouring}
\author{Jessica McDonald\thanks{Department of Mathematics and Statistics, Auburn University, Auburn, AL, USA 36849, mcdonald@auburn.edu.}
}

\date{}

\maketitle

\begin{abstract} The notion of a Galvin orientation of a line graph is introduced, generalizing the idea used by Galvin in his landmark proof of the list-edge-colouring conjecture for bipartite graphs. If $L(G)$ has a proper Galvin orientation with respect to $k$, then it immediately implies that $G$ is $k$-list-edge-colourable, but the converse is not true. The stronger property is studied in graphs of the form `bipartite plus an edge', the Petersen graph, cliques, and simple graphs without odd cycles of length 5 or longer.
\end{abstract}

\section{Introduction}

In this paper we mainly consider simple graphs, however unless otherwise stated, a graph is permitted to have multiple edges (but no loops). An \emph{edge-colouring} of a graph $G$ is an assignment of colours to the edges of $G$ such that adjacent edges receive different colours; if the colours are from $\{1, 2, \ldots, k\}$ then we say it is a $k$-edge-colouring.  A graph $G$ is \emph{$k$-list-edge-colourable} if, given any set of lists $\{S_e: e\in E(G)\}$ with $|S_e|\geq k$ for all $e\in E(G)$, there is an edge-colouring of $G$ such that, for every edge $e\in E(G)$, the colour assigned to $e$ is from the list $S_e$.
The \emph{chromatic index} of $G$, denoted $\chi'(G)$, is the smallest $k$ for which $G$ is $k$-edge-colourable; the \emph{list chromatic index} of $G$, denoted $\chi'_l(G)$, is the smallest $k$ for which $G$ is $k$-list-edge-colourable. It is clear that $\chi'_l(G)\geq \chi'(G) \geq \Delta$, where $\Delta=\Delta(G)$ is the maximum degree of $G$. The list-edge-colouring conjecture (suggested by various authors, see e.g. \cite{SSTF}) postulates that $\chi'_l(G)=\chi'(G)$ always. Arguably the most important result towards the conjecture is due to Galvin \cite{Gal}, who proved that it holds for bipartite graphs.

We follow the convention of Galvin \cite{Gal} and define line graph in a way that is convenient here, although not completely standard. The \emph{line graph} of $G$, denoted $L(G)$, is the graph with vertex set $E(G)$ and where $e, f\in E(G)$ are joined by a number of edges equal to the number of ends they share in $G$ (so 2 if they are parallel, 1 if they are adjacent but not parallel, and 0 otherwise). A \emph{kernel} in a digraph is an independent set $K$ such that every vertex outside $K$ has at least one edge into $K$. A digraph is \emph{kernel-perfect} if every induced subdigraph has a kernel.

\begin{lemma}\label{kp}\emph{(Bondy, Boppana, Siegel)} Let $G$ be a graph and suppose $L(G)$ has a kernel-perfect orientation where $d^+(e) \leq k-1$ for all $e\in V(L(G))=E(G)$. Then $G$ is $k$-list-edge-colourable.
\end{lemma}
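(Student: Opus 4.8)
The plan is to construct the list-edge-colouring greedily, one colour at a time, using the kernels supplied by kernel-perfectness. Fix lists $\{S_e : e\in E(G)\}$ with $|S_e|\geq k$ for all $e$. Only finitely many colours occur in $\bigcup_e S_e$; enumerate them as $c_1,\dots,c_m$. I would colour the edges in $m$ rounds. At the start of round $i$, let $D_i$ be the subdigraph of the given orientation of $L(G)$ induced on the still-uncoloured edges, and let $d^+$ denote out-degree in $D_i$. The invariant to maintain is that every uncoloured edge $e$ satisfies $|S_e|\geq d^+(e)+1$ at the start of each round (with $S_e$ now meaning the current, possibly shrunk, list). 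This holds at the start of round $1$ since out-degree can only decrease on passing to an induced subdigraph, so $d^+(e)\leq k-1\leq |S_e|-1$ by hypothesis.

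In round $i$, let $W$ be the set of uncoloured edges $e$ with $c_i\in S_e$. Apply kernel-perfectness to the subdigraph of $D_i$ induced on $W$ to obtain a kernel $K$, and assign colour $c_i$ to every edge of $K$. Because $K$ is independent in $L(G)$, no two edges of $K$ share an end in $G$, so the partial colouring stays proper; moreover each such edge is coloured from its own list by definition of $W$. Then remove the edges of $K$ from further consideration and delete $c_i$ from $S_e$ for every remaining edge $e$.

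Next I would verify the invariant survives the round. An uncoloured edge $e$ with $c_i\notin S_e$ keeps its list unchanged and can only lose out-neighbours, so $|S_e|\geq d^+(e)+1$ persists. An uncoloured edge $e$ with $c_i\in S_e$ lies in $W\setminus K$, so by the defining property of a kernel it has an out-edge of $D_i$ pointing into $K$; deleting $K$ therefore strictly decreases $d^+(e)$, which exactly compensates for deleting $c_i$ from $S_e$, and the inequality again holds at the start of round $i+1$. Finally, for termination: after round $i$ no remaining edge has $c_i$ in its list (it was either coloured $c_i$ or had $c_i$ deleted), so after round $m$ every remaining edge has an empty list; but the invariant forces $|S_e|\geq 1$ for any uncoloured $e$. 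Hence every edge has been coloured, giving the required list-edge-colouring.

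The step that genuinely needs care is the invariant-preservation argument, and inside it the use of the exact definition of a kernel (every vertex outside the kernel has an edge \emph{into} it) to guarantee the out-degree drop that balances the loss of a colour from the list. Kernel-perfectness — rather than merely the existence of one kernel in $L(G)$ — is precisely what allows this to be invoked for the relevant induced subdigraph in every one of the $m$ rounds.
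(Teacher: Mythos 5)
Your proof is correct: this is the standard kernel-method argument attributed to Bondy, Boppana and Siegel, with the key invariant $|S_e|\geq d^+(e)+1$ maintained because each edge of $W\setminus K$ loses an out-neighbour in $K$ exactly when it loses the colour $c_i$ from its list. The paper states Lemma \ref{kp} as a cited result and gives no proof of its own, so there is nothing to compare against; the only point worth making explicit is that properness across rounds also relies on distinct rounds using distinct colours, which your argument uses implicitly and correctly.
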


In \cite{Gal}, Galvin defined a particular orientation of the line graph of a bipartite graph, and showed that it was both kernel-perfect and had outdegree at most $\Delta$-1, hence establishing his famous result. In this paper we observe that a \emph{Galvin orientation} of $L(G)$ has a natural definition even when $G$ is not bipartite. We will see that if $L(G)$ has a \emph{proper Galvin orientation with respect to $k$} then it immediately implies that $G$ is $k$-list-edge-colourable (but that the converse is not true). In the following section we introduce these definitions and demonstrate the implication.

Although having a proper Galvin orientation with respect to $k$ is \emph{stronger} than beng $k$-list-edge-colourable, if such an orientation exists then providing one can be quite straightforward compared to other techniques for proving $k$-list-edge-colourability. This is true for graphs of the form `bipartite plus an edge'; as a first example in Section 2 we show that for such a $G$, $L(G)$ has a proper Galvin orientation with respect to $\chi'(G)$. Our proof is simpler than the proof by Plantholt and Tipnis \cite{PT} that such graphs are $\chi'$-list-edge-colourable.

It certainly not true that for every graph $G$, $L(G)$ has a proper Galvin orientation with respect to $\chi'(G)$; we will see in section 3 that this fails, in particular, when $G$ is any clique of order 4 or more. In contrast, the often-troublesome Peterson graph $\mathcal{P}$ is well-behaved; we show that $L(\mathcal{P})$ has a proper Galvin orientation with respect to $\chi'(\mathcal{P})=4$.

Given a graph $G$ where $L(G)$ does not have a proper Galvin orientation with respect to $k=\chi'(G)$, we want to know the smallest $k$ for which such an orientation exists. In Section 3 we provide a general upper bound on this $k$ for $G$ any clique. While this bound is much higher than currently known list-edge-colouring results for cliques, the gap cannot be improved in general, as it is best possible for $G=K_4$.

In our final section we focus on simple graphs without odd cycles of length 5 or longer. Such graphs (aside from $K_3$) are known to be $\Delta$-list-edge-colourable by Peterson and Woodall \cite{PW} \cite{PWe}, and hence satisfy the list-edge-colouring conjecture. Here we prove that their line graphs have proper Galvin orientations with respect to $\Delta+1$, but not $\Delta$. An above-mentioned result shows that $K_4$ has no proper Galvin orientation with respect to $\Delta$; we provide a $K_4$-free example as well.

\section{Galvin orientations of line graphs}

Given a graph $G$, a partition of $V(G)$ into sets $U$ and $D$, and a $k$-edge-colouring $\varphi$ of $G$, we define the \emph{Galvin orientation} of $L(G)$  (with respect to $U, D, \varphi$) in the following way. An edge $e \in E(L(G))$ with ends $e_1$ and $e_2$ corresponds to a common incidence of $e_1$ and $e_2$ in $G$; say $\varphi(e_1)<\varphi(e_2)$. If this common incidence is to a vertex in $D$, then we orient $e$ ``down'' in $L(G)$, from $e_2$ to $e_1$; if this common incidence is to a vertex in $U$, then we orient $e$ ``up'' in $L(G)$, from $e_1$ to $e_2$. A Galvin orientation of $L(G)$  (with respect to $U, D$, and a $k$-edge-colouring $\varphi$) is \emph{proper} if it is kernel-perfect and has outdegree at most $k-1$. By Lemma \ref{kp}, the existence of a proper Galvin orientation of $L(G)$ with respect to a $k$-edge-colouring (i.e., \emph{with respect to $k$}) proves that $G$ is $k$-list-edge-colourable.  (In fact, since we are using Lemma \ref{kp}, a proper Galvin orientation of $L(G)$ with respect to $k$ also implies that $G$ is \emph{$k$-edge-paintable}; paintability is a game-theoretic generalization of choosibility introduced by Schauz in \cite{SchPaint}).

Suppose that $G$ is bipartite, and consider the Galvin orientation of $L(G)$ with respect to any partition $(U, D)$ of $V(G)$ and any $\Delta$-edge-colouring of $G$. Given $e\in E(G)$, its end in $D$ contributes at most $\varphi(e)-1$ to its outdegree in $L(G)$, and its end in $U$ contributes at most $\Delta-(\varphi(e))$ to its outdegree in $L(G)$. Hence the outdegree of $e$ in $L(G)$ is at most $(\varphi(e)-1)+(\Delta-\varphi(e))=\Delta-1$. In \cite{Gal}, Galvin showed that this orientation is kernel-perfect (so it a proper Galvin orientation, according to our terminology), which completed his proof of the $\Delta$-list-edge-colourability of bipartite graphs. Kernel-perfectness of line graphs has since been characterized as follows. Here, a \emph{pseudochord} of a directed cycle $v_1, \ldots, v_t$ is a directed edge $v_i v_{i-1}$ for some $i$).

\begin{theorem}\label{kpchar} \emph{(Borodin, Kostochka, Woodall \cite{BKW2})} An orientation of a line graph is kernel-perfect iff every clique has a kernel and every directed odd cycle has a chord or pseduocord.
\end{theorem}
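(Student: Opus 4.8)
The plan is to prove the two implications separately; the ``only if'' direction is short, and the ``if'' direction carries the weight.

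\emph{Necessity.} Suppose $D$ is a kernel-perfect orientation of $L(G)$. Any clique of $L(G)$ is an induced subdigraph, hence has a kernel. Now suppose, for a contradiction, that some directed odd cycle $C: v_1\to v_2\to\cdots\to v_t\to v_1$ (with $t$ odd) has neither a chord nor a pseudochord. Absence of a chord means there is no arc of $D$ between non-consecutive $v_i$; absence of a pseudochord means there is no backward arc $v_m\to v_{m-1}$; so the subdigraph of $D$ induced on $\{v_1,\dots,v_t\}$ is exactly the directed cycle $C$. But a directed odd cycle has no kernel: a kernel $K$ is an independent set of the underlying $C_t$, and since the only out-arc leaving $v_i$ goes to $v_{i+1}$, the domination requirement forces $v_{i+1}\in K$ whenever $v_i\notin K$; together with independence this makes $(K,\,V(C)\setminus K)$ a proper $2$-colouring of $C_t$, which is impossible for odd $t$. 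This contradicts kernel-perfectness.

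\emph{Sufficiency, reduction to semikernels.} Call an independent set $S$ of a digraph a \emph{semikernel} if every out-neighbour of $S$ also has an arc into $S$. I would first invoke the folklore fact that if every induced subdigraph of a digraph $D$ has a \emph{non-empty} semikernel, then $D$ is kernel-perfect; this is a one-line induction---take a non-empty semikernel $S$ (so every neighbour of $S$ has an arc into $S$), delete $S$ together with all its neighbours, take a kernel $K'$ of what remains by induction, and observe that $K'\cup S$ is a kernel of $D$. Both hypotheses of the theorem are hereditary (a clique of an induced subdigraph is a clique of $D$; a chord- and pseudochord-free directed odd cycle of an induced subdigraph is one of $D$), and an induced subdigraph of $L(G)$ is the line graph of a spanning subgraph of $G$. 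Hence it suffices to prove the single statement: every orientation $D$ of a line graph $L(G)$ in which every clique has a kernel and every directed odd cycle has a chord or pseudochord has a non-empty semikernel.

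\emph{Sufficiency, producing a candidate.} Here I would use the two structural features special to line graphs: the neighbourhood of a vertex $e=xy$ of $L(G)$ is covered by the two cliques $E_x$ and $E_y$ (the edges of $G$ at $x$, resp.\ at $y$), and every clique of $L(G)$ lies inside such a star-clique or inside a triangle-clique coming from a triangle of $G$. If $D$ has a sink $v$, then $\{v\}$ is a non-empty semikernel and we are done; so assume $D$ has no sink. By the clique hypothesis each star-clique $E_w$ has a kernel vertex $k(w)$, the edge at $w$ to which every other edge at $w$ points. Starting from an arbitrary edge $e_1$ and repeatedly moving from the current edge $e$ to $k(w)$, where $w$ is the endpoint of $e$ not used to arrive at $e$, produces a directed walk $e_1\to e_2\to\cdots$ in $D$ that can be continued indefinitely (a repetition $e_{m+1}=e_m$ would, after at most one further step, exhibit $e_m$ as a sink). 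Being a walk in a finite digraph, it revisits a vertex, so it contains a directed cycle $\Gamma=(f_1\to f_2\to\cdots\to f_\ell\to f_1)$, and by construction $\Gamma$ has the key property that, for each $i$, every out-neighbour of $f_i$ in $D$ lies in the star-clique of $f_i$ lying ``ahead'' along $\Gamma$, whose kernel vertex is $f_{i+1}$.

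\emph{The main obstacle.} What remains is to extract from $\Gamma$ either a non-empty semikernel of $D$ or a violation of the odd-cycle hypothesis, and this is the heart of the argument. The natural attack is a case analysis: if $\Gamma$ has a chord, shortcut along it to a shorter directed cycle that still enjoys (a suitable weakening of) the key property, and iterate; once $\Gamma$ is chordless and pseudochord-free, an odd such $\Gamma$ contradicts the hypothesis outright, while an even one must be handled by a further, more delicate argument---using that each $f_i$'s out-neighbours all point to $f_{i+1}$ together with the triangle-clique structure---to produce a semikernel (as one does, for instance, by alternating along $\Gamma$ when its vertices have no ``extra'' forward neighbours). Making this dichotomy precise is where the difficulty lies: one must control how the key property and the parity of $\Gamma$ change under shortcutting, and, since $G$ may have multiple edges, handle the degenerate configurations---parallel edges of $G$ give the doubled arcs of $L(G)$ that act as pseudochords, while triangles of $G$ give the triangle-cliques that can supply the chords being exploited. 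It is exactly this analysis of $\Gamma$ against the clique and odd-cycle conditions, leaning on the two-clique structure of line-graph neighbourhoods, that is the real work. (When $L(G)$ happens to be perfect---for instance when $G$ has no odd cycle of length $5$ or more---one could instead quote the kernel-solvability of perfect graphs in place of this step, but the general statement appears to require the direct argument.)
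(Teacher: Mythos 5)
This theorem is quoted by the paper from Borodin, Kostochka and Woodall \cite{BKW2}; the paper contains no proof of it, so there is nothing to compare your argument against except the statement itself. Judged on its own terms, your proposal is incomplete. The necessity direction is correct, and the reduction of sufficiency to the existence of non-empty semikernels in every induced subdigraph (Neumann-Lara's lemma, plus the observation that both hypotheses are hereditary and that induced subdigraphs of $L(G)$ are line graphs of subgraphs of $G$) is sound. But the entire substance of the sufficiency direction --- extracting from your directed cycle $\Gamma$ either a semikernel or a forbidden odd cycle --- is left as a plan rather than a proof, and you say so yourself (``making this dichotomy precise is where the difficulty lies,'' ``that is the real work''). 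A proof that announces its central step as an unresolved difficulty is not a proof.

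Two concrete problems in the part you do write down. First, the ``key property'' you assert for $\Gamma$ --- that every out-neighbour of $f_i$ lies in the single star-clique ahead of $f_i$ along $\Gamma$, with kernel vertex $f_{i+1}$ --- is not justified and is false in general: a vertex $e=xy$ of $L(G)$ has its out-neighbours spread over both cliques $E_x$ and $E_y$, and your walk only controls the one clique you chose to step through, so arcs leaving $f_i$ into its other star-clique are unaccounted for; these are exactly the arcs that prevent an alternating set along $\Gamma$ from being a semikernel. Second, the shortcutting argument is not analyzed: when you replace $\Gamma$ by a shorter directed cycle through a chord you must show the relevant property survives and you must control parity, and when the chordless, pseudochord-free cycle you arrive at is even, the hypothesis gives you nothing and you need a genuinely different construction of a semikernel. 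Neither issue is addressed, so the gap is real; the argument would need to be completed along the lines of \cite{BKW2} before it could stand in for the cited theorem.
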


For a Galvin orientation of $L(G)$ where $G$ is bipartite and $(U, D)$ is a bipartition, $L(G)$ has no odd cycles, and every clique in $L(G)$ corresponds to a set of edges with a common incidence. If this common incidence is to a vertex in $D$, then the lowest-coloured edge in the set is a sink (and hence a kernel), and if this common incidence is to a vertex in $U$, then the highest-coloured edge in the set is a sink (and hence a kernel). In this way, such a Galvin orientation is always proper.

Consider now a not-necessarily bipartite graph $G$, and a Galvin orientation of $L(G)$ (with respect to $U, D$ and $k$-edge-colouring $\varphi$), and suppose we want to show that this orientation is proper.
Edges of $G$ with one endpoint each in $U$ and $D$ correspond to vertices with outdegree at most $k-1$, as desired. An edge $e$ with both ends in $D$ would also meet this standard if $\varphi(e)$ was a
\emph{low colour}, i.e., a colour in $\{1, 2, \ldots, \lfloor\tfrac{k+1}{2}\rfloor\}$, since each end contributes at most $\lfloor\tfrac{k+1}{2}\rfloor -1$ to outdegree, and $2(\lfloor\tfrac{k+1}{2}\rfloor -1) \leq 2(\tfrac{k+1}{2}-1)=k-1$. Similarly, an edge $e$ with both ends in $U$ would also meet the standard if $\varphi(e)$ was a \emph{high colour}, i.e., a colour in $\{\lceil\tfrac{k+1}{2}\rceil, \ldots, k\}$, since each end contributes at most $k- \lceil \tfrac{k+1}{2}\rceil$ to outdegree, and $2(k-\lceil \tfrac{k+1}{2}\rceil) \leq 2(k-\tfrac{k+1}{2})=k-1$. If both ends of $e$ have degree $\Delta$ and $k=\Delta$, then in fact this is the only way to meet the outdegree standard -- that is, if $e$ is between two $U$'s then it must have a high colour, and if $e$ is between two $D$'s then it must have a low colour. However, in other situations, it may happen that the outdegree condition is met without this.

Cliques in $L(G)$ either correspond to a set of edges with a common incidence, in which case they have a kernel (as in the bipartite case), or to a triangle, which must be transitive as opposed to directed in order to have a kernel. Suppose we have a triangle in $G$ with three edges $e_1, e_2, e_3$ and $\varphi(e_1)<\varphi(e_2)<\varphi(e_3)$. If both ends of $e_1$ are in $D$ then $e_1$ is a sink in the orientation of the corresponding triangle in $L(G)$ (since $\varphi(e_1)$ is the lowest-coloured edge in the triangle), and if both ends of $e_1$ are in $U$ then $e_1$ is a source (again, since $\varphi(e_1)$ is the lowest-coloured edge in the triangle). Similarly, if both ends of $e_3$ are in the same partite set, then $e_3$ is either a source (if both ends are in $D$) or a sink  (if both ends are in $U$), since $e_3$ is the highest-coloured edge in the triangle. Note that having the ends of $e_2$ in different partite sets is equivalent to having exactly one of $e_1, e_3$ with ends in the same partite set. To finish checking kernel-perfectness (and hence complete our proof that the Galvin orientation is proper), we would also have to know that all directed odd cycles of length 5 or longer in $L(G)$ have a chord or pseudocord.

As a first example of proper Galvin orientations, we provide the following result about graphs of the form `bipartite plus an edge'. Plantholt and Tipnis \cite{PT} proved that such graphs satisfy the list-colouring conjecture, however the following proof turns out to be simpler.

\begin{theorem}\label{bipplusone} If $G$ is obtained from a bipartite graph by adding an edge, then $L(G)$ has a proper Galvin orientation with respect to $\chi'(G)$. The same conclusion is obtained if a multi-edge is added, provided its multiplicity is no more than $\lfloor\tfrac{\chi'(G)+1}{2}\rfloor$.
\end{theorem}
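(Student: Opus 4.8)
The plan is to start from a standard edge-colouring of $G$ and choose the partition $(U,D)$ and the colouring $\varphi$ carefully so that the structural conditions from Theorem~\ref{kpchar} are automatic. Write $G = H + e_0$ where $H$ is bipartite with bipartition $(A,B)$, and let $e_0 = xy$. There are two cases according to whether $x,y$ lie in the same part of $(A,B)$ or in different parts. If $x$ and $y$ are in different parts, then $G$ itself is bipartite and we are done by the bipartite case already discussed in the excerpt, so the real content is when $x,y$ lie in the same part, say $x,y\in A$; then $G$ contains odd cycles, all of which pass through $e_0$. Set $k=\chi'(G)$. First I would fix a $k$-edge-colouring $\varphi$ of $G$ (which exists by definition of $\chi'$), and then try to take the partition $U = B$, $D = A$ (or its reverse), so that $e_0$ is the unique edge with both ends in the same partite set.

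Next I would check the outdegree bound. Every edge other than $e_0$ has one end in $U$ and one in $D$, so by the computation in the excerpt its outdegree in the Galvin orientation is at most $(\varphi(e)-1) + (k-\varphi(e)) = k-1$. For $e_0$ itself, with both ends in $D=A$, its outdegree is at most $2(\varphi(e_0)-1)$, which is $\le k-1$ precisely when $\varphi(e_0)$ is a low colour, i.e. $\varphi(e_0)\le \lfloor\tfrac{k+1}{2}\rfloor$. So the key point is to arrange that $e_0$ receives a low colour; alternatively, placing $x,y\in D$ versus $U$ gives the symmetric option of making $e_0$ a high colour. The existence of a $k$-edge-colouring of $G$ in which $e_0$ gets colour $1$ (say) should follow by a short argument: delete $e_0$, colour the bipartite graph $H$ with $\Delta(H)\le k$ colours so as to avoid colour $1$ on the $\le 2(k-1)$ edges meeting $e_0$ — since those edges use at most $2(k-1)$ colours out of... — hmm, more carefully, one recolours along alternating paths, or simply observes that $H = G - e_0$ is bipartite hence $\Delta$-edge-colourable and $\Delta(G-e_0)\le k$, and then a colour missing at both $x$ and $y$ can be forced to be $1$ by permuting colour classes; if no single colour is missing at both ends simultaneously one uses a Kempe chain swap on a two-coloured path between $x$ and $y$. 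For the multi-edge version with multiplicity $\mu \le \lfloor\tfrac{k+1}{2}\rfloor$, the $\mu$ parallel copies between $x$ and $y$ all have both ends in $D$, and I would colour them with the $\mu$ lowest colours $1,\dots,\mu$; each such copy then has outdegree at most $(\mu-1) + 2\big(\lfloor\tfrac{k+1}{2}\rfloor - \mu\big) \le k-1$, using that within the multi-edge the down-orientation costs at most $\mu-1$ and each endpoint in $D$ contributes at most $\lfloor\tfrac{k+1}{2}\rfloor-1$ beyond... this inequality needs to be checked but is the natural bookkeeping.

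Finally I would verify kernel-perfectness via Theorem~\ref{kpchar}. Cliques in $L(G)$ are handled exactly as in the excerpt's general discussion: a clique coming from edges with a common incidence at a vertex in $D$ has the lowest-coloured edge as a sink, and one at a vertex in $U$ has the highest-coloured edge as a sink; a triangle in $G$ must contain $e_0$ (since $H$ is bipartite) and hence has both ends of some edge in the same partite set, so by the excerpt's triangle analysis it is transitive — provided I check that $e_0$, being low-coloured with both ends in $D$, is correctly a source in its triangle, which matches the stated criterion. For directed odd cycles of length $\ge 5$ in $L(G)$: such a cycle corresponds to a closed walk in $G$, and I would argue it must "use" $e_0$ in a way that forces a chord or pseudochord, since away from $e_0$ the graph behaves bipartitely. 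Concretely, any odd cycle in $L(G)$ not meeting the vertex $e_0\in V(L(G))$ would lie in $L(H)$, which has no odd cycles; so $e_0$ is on the cycle, and its two neighbours on the cycle are edges of $G$ incident to $e_0$ — I would show these two neighbours share an end (both at $x$, or both at $y$, or one configuration forcing a chord among three edges at a common vertex) or else the orientation around $e_0$ produces a pseudochord.

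The step I expect to be the main obstacle is this last one — ruling out or chord-certifying long directed odd cycles through $e_0$ in $L(G)$. The multiplicities make $L(G)$ slightly more intricate (parallel edges in $G$ give doubled edges in $L(G)$), and one has to be careful that the orientation choices forced by the outdegree bound (low colour on $e_0$) are consistent with the chord/pseudochord requirement on every such cycle, rather than just on short ones. I would handle it by a careful case analysis on how a directed odd cycle can pass through the vertex $e_0$, using that all other edges are "bipartite-type" and that colours along a directed cycle are constrained (monotone on down-edges to a sink-like vertex, etc.), to locate the required chord.
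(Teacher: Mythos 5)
Your setup matches the paper's: you take $D=A$, $U=B$ so that the added (multi-)edge $e_0$ is the only edge inside a partite set, and you arrange for $e_0$ to receive a low colour. But the step you flag as ``the main obstacle'' --- handling directed odd cycles of length $5$ or more in $L(G)$ --- is exactly the step you have not actually proved, and your proposed route (a case analysis locating a chord or pseudochord through $e_0$) is not the right one. The resolution is much simpler and needs no chords at all: any odd cycle of $L(G)$ must contain $e_0$ or one of its parallel copies (since $L(G-e_0)=L(H)$ is the line graph of a bipartite graph and has no odd cycles, as you note). Take the lowest-coloured such copy $e'$ lying on the cycle. Its two neighbours on the cycle are edges of $G$ meeting $e'$ at $x$ or at $y$, both of which are in $D$, and both neighbours have strictly higher colour than $e'$ (all edges at $x$ and $y$ other than the parallel class do, and the parallel copies on the cycle of lower colour have been excluded by minimality). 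By the definition of the Galvin orientation, an edge of $L(G)$ whose common incidence is in $D$ points from the higher colour to the lower, so both cycle-edges at $e'$ point \emph{into} $e'$; hence $e'$ is a sink on the cycle and the cycle is not directed. This disposes of all odd cycles (and of triangles) at once, which is the paper's argument.

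Two smaller points. First, you do not need to colour $H$ and then extend to $e_0$ via Kempe chains: simply take any $\chi'(G)$-edge-colouring of $G$ itself and permute the colour names so that $e_0$ and its parallel copies receive colours $1,2,\ldots,\mu$; the hypothesis $\mu\le\lfloor\tfrac{\chi'(G)+1}{2}\rfloor$ guarantees these are all low colours. Second, your outdegree bookkeeping for the multi-edge case, $(\mu-1)+2\bigl(\lfloor\tfrac{k+1}{2}\rfloor-\mu\bigr)$, is not the right count. The copy coloured $i$ has both ends in $D$, and each end sees at most $i-1$ lower-coloured edges (parallel copies being counted once at each end), so its outdegree is at most $2(i-1)\le 2(\mu-1)\le 2\bigl(\lfloor\tfrac{k+1}{2}\rfloor-1\bigr)\le k-1$, which is just the general ``low colour between two $D$'s'' computation from Section 2.
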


\begin{proof} If $G$ remains biparitite, then the result is true by Galvin \cite{Gal}, as discussed above. Otherwise, partition the vertices of $G$ into sets $D$ and $U$ so that $G[U]$ is independent and $G[D]$ contains only one edge $e$ (and possibly some edges in parallel to $e$). Assign a $\chi'(G)$-edge-colouring to $G$ so that $e$, and any edges in parallel to $e$, receive the lowest colours, i.e. colours $1, 2, \ldots$ (as many colours as needed). By assumption, these are all low colours, and hence the Galvin orientation satisfies the outdegree condition. Moreover, no cycle containing $e$ (or an edge in parallel to $e$) is directed: the previous and next edge in any such cycle would both have higher colours, making $e$ (or its parallel edge) a sink in the cycle.
\end{proof}

One way to approach defining a proper Galvin orientation is to try to maximize the number of edges between $U$ and $D$, as we did in the above proof. Certainly, this can be helpful when checking the outdegree condition. Note however that a cycle all of whose vertices are in either $U$ or $D$ cannot be directed, so forcing such cycles can also be helpful. If the entire graph is an odd cycle, then in fact it has a proper Galvin orientation (with respect to $\chi'=3$) where all vertices are in $D$ (or all vertices are in $U$). This is because each edge in an odd cycle will have total degree $2$ in the line graph, so the outdegree condition is trivially met.

\section{The Petersen graph versus cliques}

The Petersen graph is known to satisfy the list-colouring conjecture (this follows from Juvan, Mohar and Skrekovski \cite{JMS}); here we show that additionally, its line graph has a proper Galvin orientation with respect to $4$.

\begin{theorem} Let $\mathcal{P}$ be the Petersen graph. Then $L(\mathcal{P})$ has a proper Galvin orientation with respect to $\chi'(\mathcal{P})=4$.
\end{theorem}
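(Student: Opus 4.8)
The plan is to exhibit a concrete $4$-edge-colouring $\varphi$ of $\mathcal{P}$ together with a concrete partition $(U,D)$ of $V(\mathcal{P})$, and then to verify that the associated Galvin orientation of $L(\mathcal{P})$ is proper by means of Theorem~\ref{kpchar}. For the construction I would use that deleting any perfect matching $M$ from $\mathcal{P}$ leaves two disjoint $5$-cycles $C,C'$ (the complement of $M$ is $2$-regular of girth $5$, and $\mathcal{P}$ is not Hamiltonian). Set $D=V(C)$ and $U=V(C')$, so $M$ is precisely the set of edges with one end in each part; then properly colour $C$ with the low colours $\{1,2,3\}$ and $C'$ with the high colours $\{2,3,4\}$, each colouring using its extreme colour ($3$ on $C$, $2$ on $C'$) only once, and extend to a proper $4$-edge-colouring $\varphi$ by colouring $M$. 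The dihedral symmetry of the decomposition and the freedom to transpose colours $2$ and $3$ provide the room to do this and to impose the extra conditions below.

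For the outdegree condition, since $\mathcal{P}$ is cubic every vertex of $L(\mathcal{P})$ has total degree $4$, so an edge has outdegree at most $k-1=3$ exactly when it has indegree at least $1$. For edges of $M$ this is automatic. An edge $e\subseteq C$ has both ends in $D$ and has indegree at least $1$ as soon as it has an $L(\mathcal{P})$-neighbour of strictly larger colour; as $C$ avoids colour $4$, the only real danger is the unique colour-$3$ edge of $C$, which is adjacent in $\mathcal{P}$ to no inner edge and so must be adjacent to an edge of $M$ of colour $4$, while each colour-$2$ edge of $C$ not meeting the colour-$3$ edge needs an $M$-neighbour of colour $3$ or $4$. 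Inside $C'$ everything is the mirror image with the colour order reversed: colour $1$ is forbidden, and the unique colour-$2$ edge of $C'$ needs an $M$-neighbour of colour $1$. In particular this forces colour $4$ to appear on some edge of $M$, and pins down (up to the symmetry) which edges of $M$ carry colours $1$ and $4$.

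It remains to check kernel-perfectness. Every clique of $L(\mathcal{P})$ is the set of edges at a common vertex of $\mathcal{P}$ (as $\mathcal{P}$ is triangle-free), and the orientation restricted to such a clique is a transitive tournament ordered by $\varphi$, which has a sink and hence a kernel. So by Theorem~\ref{kpchar} it suffices to show that every directed odd cycle of $L(\mathcal{P})$ of length at least $5$ has a chord or a pseudochord. A chordless cycle of $L(\mathcal{P})$ of length at least $4$ corresponds to an induced cycle of the same length in $\mathcal{P}$, and $\mathcal{P}$ has no induced odd cycle of length $7$ or $9$ (for length $9$ since $\mathcal{P}-v$ has $12$ edges, not $9$; for length $7$ since the two non-adjacent vertices outside a would-be induced $C_7$ would then have no common neighbour, contradicting that $\mathcal{P}$ has diameter $2$), so the only relevant chordless odd cycles are the twelve pentagons of $\mathcal{P}$. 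As an induced directed $C_5$ has neither a chord nor a pseudochord, I must ensure that no pentagon of $\mathcal{P}$ is directed. The pentagons $C$ and $C'$ lie entirely in $D$, respectively entirely in $U$, hence are automatically non-directed. Each remaining pentagon $\Pi$ uses exactly two edges of $M$ (one outer arc and one inner arc), and two or three of its vertices lie consecutively in one part; were $\Pi$ directed, its edge-colours would have to run strictly monotonically through a block of three consecutive steps, which forces the two $M$-edges of $\Pi$ to carry colours $1$ and $4$. Hence, having arranged that $M$ contains a unique edge of colour $4$ and a unique edge of colour $1$, the only pentagon that could be directed is the (essentially unique) one whose $M$-edges are precisely these two, and whether it is directed comes down to a single inequality between the colours of its two inner-arc edges; I would choose the $3$-edge-colouring of $C'$ so that this inequality fails.

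The main obstacle is making all these requirements consistent in a single $\varphi$. Because $\mathcal{P}$ has no two disjoint perfect matchings, $\mathcal{P}-e$ is never $3$-edge-colourable, so no colour of $\varphi$ can be used only once; thus colour $4$ must occur on an edge outside $M$ as well, and no choice of $(U,D)$ makes every pentagon automatically non-directed, so the bipartite-style argument does not apply here. One therefore has to pin down explicitly which edges of $M$ receive colours $1$ and $4$ and which $3$-edge-colourings go on $C$ and $C'$, so that simultaneously the outdegree conditions hold, $M$ has a unique edge of each of colours $1$ and $4$, and the single dangerous pentagon is spoiled. I expect this finite compatibility check --- kept short by the dihedral symmetry of the decomposition --- to be where the content of the proof lies, with everything else amounting to bookkeeping.
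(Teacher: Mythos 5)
Your overall framework (an explicit $\varphi$ and $(U,D)$, outdegree checked via ``indegree at least $1$'' in the cubic case, cliques being stars, then Theorem~\ref{kpchar}) is sound, and the matching-plus-two-pentagons decomposition is a reasonable starting point; the paper likewise proceeds by exhibiting an explicit colouring and partition. But there is a genuine gap in your reduction of the cycle condition to the twelve pentagons. A chordless cycle of length $t\geq 4$ in $L(\mathcal{P})$ corresponds to a $t$-cycle of $\mathcal{P}$ that need \emph{not} be induced: if $w_1,\ldots,w_t$ is any cycle of $\mathcal{P}$, the edges $w_iw_{i+1}$ form a cycle in $L(\mathcal{P})$ in which only consecutive members are adjacent (the $w_i$ being distinct forces non-consecutive edges to be vertex-disjoint), and a chord $w_iw_j$ of the cycle in $\mathcal{P}$ is a \emph{vertex} of $L(\mathcal{P})$, not a chord of the corresponding line-graph cycle. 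So your argument excluding induced $C_9$'s in $\mathcal{P}$ is beside the point: $\mathcal{P}$ is hypohamiltonian, hence has many $9$-cycles, and each one yields a chordless odd cycle of length $9$ in $L(\mathcal{P})$ that must be shown non-directed (there are no pseudochords to fall back on, since $L(\mathcal{P})$ is simple). Your ``spoil the one dangerous pentagon'' endgame never examines these. (You are right that $7$-cycles do not occur.)

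Beyond this, the proposal stops short of a proof even on its own terms: the colouring is never pinned down, and the ``finite compatibility check'' you defer is exactly where the verification lives. The paper's proof handles both issues at once with a device worth adopting: it arranges for a few edges to be $1$-edges between two $D$'s or $4$-edges between two $U$'s. Such an edge has all four of its neighbours in $L(\mathcal{P})$ oriented into it, so it is a sink of \emph{every} cycle through it, and the cycle condition then only needs to be checked for odd cycles of $\mathcal{P}$ avoiding those edges. Choosing three such edges whose deletion leaves a graph with only two odd cycles (both pentagons) disposes of all $5$- and $9$-cycles simultaneously; some analogue of this (or an explicit check of the roughly thirty remaining odd cycles) is needed to complete your approach.
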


\begin{proof} We claim that a proper Galvin orientation of $L(\mathcal{P})$ with respect to 4 is explicitly given in Figure \ref{PetersenGalvin}. In the figure, a 4-edge-colouring is given, and each vertex is assigned to one of $U$ or $D$. Note that all edges between two $D$'s have low colours (colours 1 or 2) and that all edge  between two $U$'s have high colours (3 or 4). Hence the outdegree condition is satisfied.

\begin{figure}
\centering
\includegraphics[width=9cm]{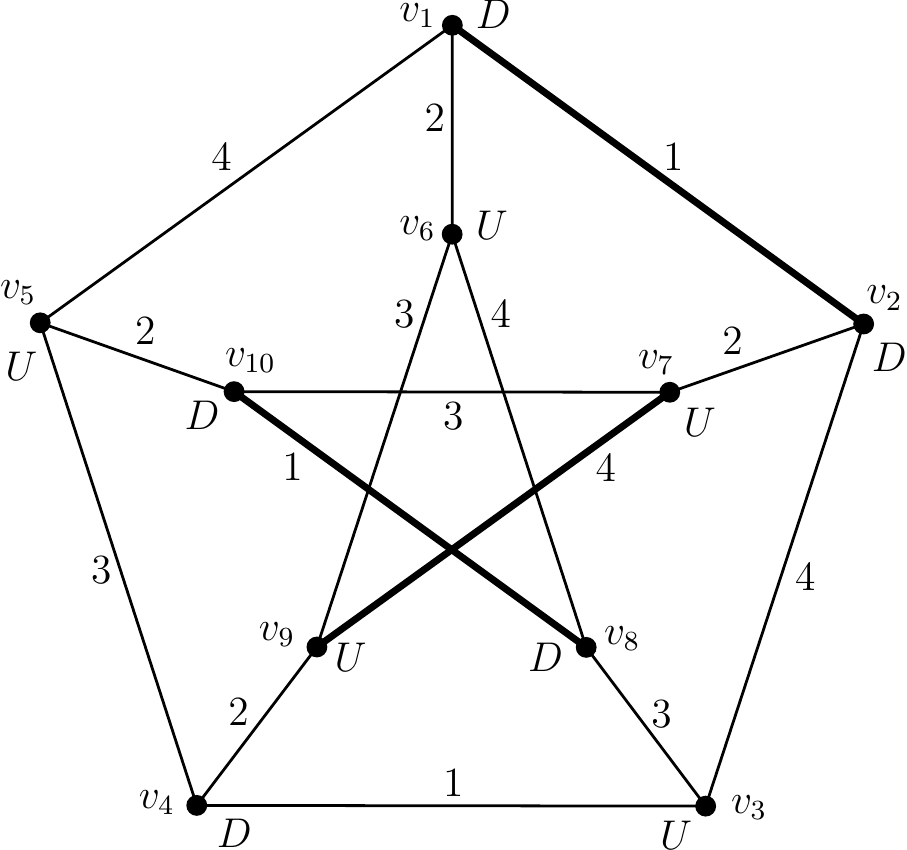}
\caption{A proper Galvin orientation of $L(\mathcal{P})$ with respect to a 4-edge-colouring.}
\label{PetersenGalvin}
\end{figure}

Three edges are bolded in the figure -- $v_1v_2, v_7v_9$, and $v_8v_{10}$ -- these are either 1-edges between two $D$'s or $4$-edges between two $U$'s. Since 1 is our lowest colour and 4 is our highest colour, these three edges will be sinks in any cycles in which they occur. Hence to check the cycle condition, it suffices to check that all odd cycles in $\mathcal{P}-\{v_1v_2, v_7v_9, v_8v_{10}\}$ will be undirected in the line graph. The only odd cycles in this graph are the following two 5-cycles: $v_1, v_6, v_9, v_4, v_5$ and $v_3, v_4, v_9, v_6, v_8$. In the first of these, the edge $v_6v_9$ is a sink, and in the second, the edge $v_4v_9$ is a source.
\end{proof}

Although the Petersen graph is (unusually) well-behaved, it is certainly not true that for every graph $G$, $L(G)$ has a proper Galvin orientation with respect to $\chi'(G)$. In fact, we show now that this fails when $G$ is any clique of order 4 or more. (We already know that is does not fail for $K_3$, as odd cycles are included in Theorem \ref{bipplusone}, and additionally discussed at the end of section 2).

\begin{theorem}\label{cliquecounter} $L(K_n)$ has no proper Galvin orientation with respect to $\chi'(K_n)$, for any $n\geq 4$.
\end{theorem}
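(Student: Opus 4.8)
The plan is to assume, for contradiction, that some partition $(U,D)$ of $V(K_n)$ and $\chi'(K_n)$-edge-colouring $\varphi$ give a proper Galvin orientation of $L(K_n)$, write $k=\chi'(K_n)$, and derive a contradiction. I would first read off what the outdegree condition says about the ``same-part'' edges of $K_n$: a $UU$-edge of colour $c$ has outdegree at least roughly $2(k-c)$ and a $DD$-edge of colour $c$ at least roughly $2(c-1)$ (with a two-unit correction in the odd case, where each vertex misses one colour), so $UU$-edges must carry high colours and $DD$-edges low colours; in particular, for $n\ge 4$ no $UU$-edge has colour $1$ and no $DD$-edge has colour $k$. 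Since $\varphi$ uses all $k$ colours and, for $n\ge 4$, the colour class $M_1$ (resp.\ $M_k$) is a matching of size $\lfloor n/2\rfloor\ge 2$ with no edge inside $U$ (resp.\ inside $D$), I can first dispose of $\min(|U|,|D|)\le 1$: if $|D|\le 1$ then $M_1$ must have an edge inside $U$, and if $|U|\le 1$ then $M_k$ must have an edge inside $D$, each contradicting the outdegree bound. So $|U|,|D|\ge 2$; and now $M_1$, being (near-)perfect with no $UU$-edge, matches the vertices of $U$ it covers injectively into $D$, and symmetrically for $M_k$, which forces $|U|=|D|=n/2$ when $n$ is even and $\{|U|,|D|\}=\{(n-1)/2,(n+1)/2\}$ when $n$ is odd.

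Next I would invoke the triangle analysis already carried out just before the statement: for a triangle $pqr$ of $K_n$ with $p,q$ in one part and $r$ in the other, the corresponding triangle of $L(K_n)$ is a directed $3$-cycle — hence a chordless, pseudochord-free clique with no kernel — exactly when $\varphi(pq)$ lies strictly between $\varphi(pr)$ and $\varphi(qr)$, so Theorem \ref{kpchar} forbids this for every such triangle. The key consequence: if $d\in D$ lies on a colour-$1$ edge $d\mu$ with $\mu\in U$, then for every other $d'\in D$ the colour of $dd'$, which cannot be $1$, cannot be squeezed strictly between $\varphi(d\mu)=1$ and $\varphi(d'\mu)$, so $\varphi(dd')\ge\varphi(d'\mu)$; combined with the upper bound on $DD$-colours this confines the colours on all edges from $\mu$ into $D$ to a short initial segment of $\{1,\dots,k\}$. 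A colour-$k$ version of the same argument, run symmetrically, confines the colours on the edges from a $D$-vertex into $U$ to a short final segment.

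When $n$ is even these two forcings collide cleanly: since now $M_1,M_k$ are perfect matchings between $U$ and $D$, every $U$-vertex sends exactly $\{1,\dots,n/2\}$ into $D$ while every $D$-vertex sends exactly $\{n/2,\dots,n-1\}$ into $U$, so every edge between $U$ and $D$ is forced to have colour $n/2$ — impossible for $n\ge 4$, since $K_{n/2,n/2}$ has adjacent edges. When $n$ is odd, write $k=n$ and let $m$ be the bijection sending each vertex to its unique missing colour; take $|U|=(n+1)/2\ge|D|=(n-1)/2$. Every colour class $M_i$ with $i<(n-1)/2$ has no $UU$-edge, hence, covering all of $D$, is a perfect matching from $D$ onto $U\setminus\{m^{-1}(i)\}$ with $m^{-1}(i)\in U$; so each $D$-vertex already spends colours $1,\dots,(n-3)/2$ on edges into $U$, and its $(n-3)/2$ internal edges use colours confined to $\{(n-1)/2,\dots,n\}$. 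Intersecting with the outdegree bound $\varphi(dd')\le(n+3)/2$ leaves only the three colours $(n-1)/2,(n+1)/2,(n+3)/2$ available on those $(n-3)/2$ distinctly coloured edges, which is already impossible once $(n-3)/2>3$, i.e.\ $n\ge 11$.

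This leaves the three small odd orders $n=5,7,9$, where the counting is not yet decisive, and I expect these finitely many cases — not any missing idea — to be where the real work lies. For $n=9$ the argument above forces each $D$-vertex to use exactly $\{4,5,6\}$ on its internal edges, hence its missing colour into $\{7,8,9\}$, contradicting injectivity of $m$ since $|D|=4$. For $n=5$ and $n=7$ I would finish by colour-by-colour bookkeeping: every low or high near-perfect colour class is a perfect cross-matching, which together with the triangle condition pins down all colours on the $UU$-edges, and then a $UU$-edge forced to colour $(n-1)/2$ has an endpoint whose missing colour is at most $(n-1)/2$, contradicting the refined outdegree condition for such an edge (and for $n=5$, where $|D|=2$, a short case analysis on the colour of the unique $DD$-edge forces two vertices to share a missing colour).
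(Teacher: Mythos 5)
Your overall strategy is genuinely different from the paper's. The paper's proof is purely local: in both parities it chases a short chain of forced vertex assignments ($x\in D$, the $k$-edge forces $y\in U$, the $1$-edge or $2$-edge forces $z\in D$, etc.) until a directed triangle appears, and the odd case never splits on $n$. Your proof is global: you extract structural information about entire colour classes (low classes are perfect cross-matchings from $D$ into $U$), combine it with the outdegree bound on $DD$-edges, and reach a pigeonhole contradiction. Your preliminary reductions are sound — the balancedness of $(U,D)$, the characterization ``a triangle with exactly two vertices in one part is directed in $L(K_n)$ iff the same-part edge carries the middle colour,'' and the consequence that a colour-$1$ cross edge $d\mu$ forces $\varphi(d'\mu)<\varphi(dd')$ for all other $d'\in D$. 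In fact this last forcing already kills the even case by itself (the $n/2-1$ edges from $\mu$ into $D\setminus\{d\}$ would need distinct colours in $\{2,\dots,n/2-1\}$), so your intermediate assertion that every $U$-vertex sends \emph{exactly} $\{1,\dots,n/2\}$ into $D$, while not quite what the outdegree condition gives you directly, is moot. The odd argument for $n\ge 11$ and the $n=9$ finish via injectivity of the missing-colour map are correct.

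The genuine gap is $n=5$ and $n=7$. You do not exhibit a contradiction there, and the sketch you give for doing so rests on a false claim: it is \emph{not} true that ``every low or high near-perfect colour class is a perfect cross-matching.'' The low-colour argument works because $M_i$ covers all of $D$ and $|D|<|U|$ forces every edge to have a $D$-end; the mirrored argument for high colours fails precisely because $|U|=(n+1)/2>|D|$, so $M_k$ (say) may legitimately contain a $UU$-edge together with cross edges — the counting contradiction evaporates. Consequently the plan of ``pinning down all colours on the $UU$-edges'' from the high colour classes does not go through as described. The cases are rescuable within your framework (e.g.\ for $n=7$ one can show the $DD$-triangle uses exactly $\{3,4,5\}$, then apply your colour-$1$ and colour-$2$ triangle forcings to the third $D$-vertex to demand three distinct cross-colours in $\{1,2\}$; $n=5$ yields to a similar finite analysis), but as written the proof is incomplete for these two orders, and the fix is not the bookkeeping you describe. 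This is worth contrasting with the paper's argument, which needs no case split on $n$ at all.
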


\begin{proof} Suppose, for a contradiction, that $L(K_n)$ does have a proper Galvin orientation with respect to $k=\chi'(K_n)$, say with $\varphi, D, U$.

First suppose that $n$ is even. Then, every colour in $\varphi$ induces a perfect matching in $K_n$. Note not all vertices of $K_n$ can be in $U$, since the 1-edges cannot be between two $U$'s and still satisfy the outdegree condition. So, at least one vertex must be in $D$, say vertex $x$. (See Figure \ref{K4counter} for a depiction of the $n=4$ case). Let $y$ be the vertex such that $xy$ has colour $k$. Since $k$ cannot be between two $D$'s, $y$ must be in $U$. Let $z$ be the vertex such that $yz$ has colour $1$. Since 1 cannot be between two $U$'s, $z$ must be in $D$. However now, the triangle $xyz$ corresponds to a directed triangle in the line graph: its highest- and lowest-coloured edges are both between $U$ and $D$.

\begin{figure}
\centering
\includegraphics[width=6cm]{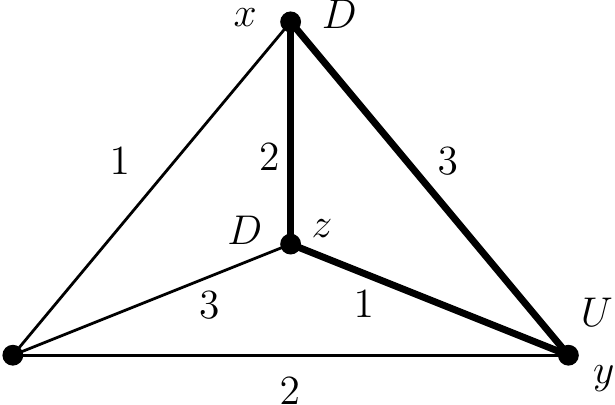}
\caption{$L(K_4)$ does not have a proper Galvin orientation with respect to $\chi'(K_4)=3$.}
\label{K4counter}
\end{figure}

Now suppose that $n$ is odd. In $\varphi$, every colour in $1, 2, \ldots, k$ is missing at exactly one vertex of $K_n$. Since $n\geq 5$, every colour occurs on at least two edges. If a 1-edge is between two $U$'s, then even though $k=\Delta+1$ (as opposed to $\Delta$), the corresponding outdegree will be $2(\Delta-1)\geq k=\Delta+1$ (since $\Delta\geq 4$), and the outdegree condition will be violated. So a $1$-edge cannot be between two $U$'s, and hence there are at least two vertices of $K_n$ in $D$. Choose one of these, say $x$, that is incident to an edge of colour $k$, and let $y$ be the other end of the $k$-edge. For the same reason that 1 cannot be between two $U$'s, $k$ cannot be between two $D$'s, so $y$ must be in $U$. If $1$ is not missing at $y$, then taking $z$ where $yz$ is this 1-edge, we get that $xyz$ corresponds to a directed triangle in the line graph, just as in the previous case. So, suppose that $1$ is missing at $y$. See Figure \ref{oddcliquecounter}. Then 2 is not missing at $y$, so we may choose $z'$ such that $yz'$ is a 2-edge. If $z'$ were in $U$, the outdegree of this 2-edge would be $(\Delta-1)+(\Delta-2)$ ($\Delta-1$ from $y$, where no 1 is present). Since $\Delta\geq 4$, $2\Delta-3 \geq k=\Delta+1$ , so this would violate the outdegree condition. Hence $z' \in D$.

\begin{figure}
\centering
\includegraphics[width=6.5cm]{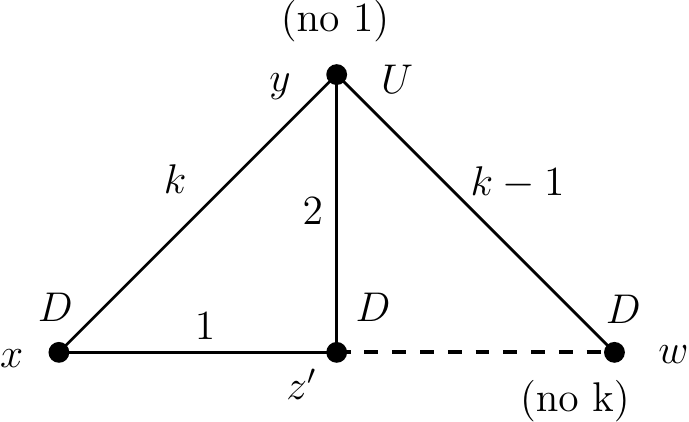}
\caption{The vertices $x, y, z', w$ in the odd clique case for the proof of Theorem \ref{cliquecounter}.}
\label{oddcliquecounter}
\end{figure}

The only way that $xyz'$ will not yield a directed triangle is if $xz'$ has a colour lower than 2, i.e., if $xz'$ has colour $1$, so assume this is so. We previously argued that each 1-edge has at least one end that is in $D$, and there are at least two 1-edges. Since we have just seen that the 1-edge $xz'$ has both ends in $D$, this means that there are at least 3 vertices in $D$; let $w\neq x, z'$ be another vertex in $D$. If $w$ is not missing colour $k$, then 1 will not be missing at the other end of this $k$-edge (since 1 is only missing at one vertex), and hence we will get a directed triangle, as previously. So, $w$ must be missing colour $k$. Then, however, it is not missing colour $k-1$. Let $y'$ be the other end of this $(k-1)$-edge. For the same reason that $yz'$ (coloured 2 and with 1 missing missing at $y$) cannot be between two $U's$, $wy'$ (coloured $k-1$ and with $k$ missing at $w$) cannot be between two $D$'s, so $y'\in U$.

We claim that $y'=y$. If not, then there is a 1-edge incident to $y'$, with other end $z''\in D$. Since $w$ is missing $k$, the colour of edge $wz''$ must be be more than 1 and less than $k-1$, and hence the triangle $zy'z''$ yields a directed triangle in the line graph. So, indeed, $y'=y$.

Finally, consider the edge $wz'$ (depicted as dotted in Figure \ref{oddcliquecounter}). This edge cannot have colour 1, since $z'x$ does, nor can it have colour $k$, since $k$ is missing at $w$. So it has a colour that is larger than $2$ and smaller than $k-1$, and hence $ywz' $ yields a directed triangle in the line graph.
\end{proof}

While $L(K_4)$ has no proper Galvin orientation with respect to $3$, it it easy to see that does have such an orientation with respect to $4$. We generalize this to the following result providing proper Galvin orientations for all cliques.

\begin{theorem}\label{cliquebound} $L(K_{\Delta+1})$ has a proper Galvin orientation with respect to $f(\Delta+1)$, where
$$f(\Delta+1)= \left\lfloor\frac{3\Delta}{2}\right\rfloor +\begin{cases}
      0 & \textrm{if $\Delta+1\equiv 0$ mod 4} \\
      1 & \textrm{if $\Delta+1\equiv 1$ mod 4} \\
      2 & \textrm{if $\Delta+1\equiv 2$ mod 4} \\
      1 & \textrm{if $\Delta+1\equiv 3$ mod 4}.
   \end{cases}$$
\end{theorem}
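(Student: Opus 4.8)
The plan is to split $K_{\Delta+1}$ into two nearly-equal cliques and colour it with three disjoint ``bands'' of colours, low/middle/high. Write $n=\Delta+1$, put $d=\lceil n/2\rceil$ and $u=\lfloor n/2\rfloor$, and partition $V(K_n)=D\cup U$ with $|D|=d\ge u=|U|$. I would take
$$k \;=\; \chi'(K_d)+\chi'(K_u)+d .$$
A routine case analysis on $n\bmod 4$, using $\chi'(K_m)=m-1$ for $m$ even and $\chi'(K_m)=m$ for $m$ odd, shows that $k=f(\Delta+1)$ when $\Delta\ge 3$ and $k\le f(\Delta+1)$ when $\Delta\le 2$; since a proper Galvin orientation with respect to $k$ is also one with respect to every $k'\ge k$, it is enough to produce one with respect to this $k$. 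The colouring $\varphi$: colour the clique $K_n[D]$ properly with the low band $\{1,\dots,\chi'(K_d)\}$, colour $K_n[U]$ properly with the high band $\{k-\chi'(K_u)+1,\dots,k\}$, and colour the bipartite graph $B$ of edges between $D$ and $U$ properly with the middle band $\{\chi'(K_d)+1,\dots,\chi'(K_d)+d\}$ -- this is possible because $B$ is bipartite with $\Delta(B)=d$, so $\chi'(B)=d$. As the three bands are disjoint intervals, $\varphi$ is a proper $k$-edge-colouring of $K_n$, and it remains to verify that the Galvin orientation of $L(K_n)$ with respect to $U,D,\varphi$ is proper.

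The outdegree condition falls out of the banding. Every edge of $K_n[D]$ carries a low colour, since one checks $\chi'(K_d)\le\lfloor\tfrac{k+1}{2}\rfloor$ (equivalently $\chi'(K_d)\le\chi'(K_u)+d+1$, which is clear), so its two ends in $D$ contribute at most $2(\lfloor\tfrac{k+1}{2}\rfloor-1)\le k-1$ to its outdegree; symmetrically every edge of $K_n[U]$ carries a high colour; and every edge of $B$ has one end in each part, hence outdegree at most $k-1$ automatically. For the clique part of Theorem \ref{kpchar}: a clique of $L(K_n)$ is either a (sub)star, which has a sink, or a triangle of $K_n$. For a triangle, the key is that its extreme-coloured edge lies inside one part: if the triangle meets both $U$ and $D$ then it contains an edge of $K_n[D]$ (coloured $\le\chi'(K_d)$, hence the lowest of the three) or an edge of $K_n[U]$ (coloured $\ge k-\chi'(K_u)+1$, hence the highest of the three); and if it lies inside one part the same holds. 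By the discussion following Theorem \ref{kpchar} that extreme edge is a sink or a source, so the triangle is transitive and has a kernel.

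The crux is the third condition of Theorem \ref{kpchar}: every directed odd cycle $C$ of length at least $5$ in $L(K_n)$ has a chord or pseudochord. Suppose $C$ has neither, and let $\bar{e}$ be the vertex of $C$ of smallest colour. Exactly one arc of $C$ enters $\bar{e}$ and one leaves; because $\bar{e}$ has the smallest colour, the entering arc is a ``down'' arc, so $\bar{e}$ shares a $D$-vertex with its predecessor on $C$, while the leaving arc is an ``up'' arc, so $\bar{e}$ shares a $U$-vertex with its successor. If these two shared vertices coincide, the predecessor and the successor are adjacent and $C$ has a chord; otherwise they are the two ends of $\bar{e}$, so $\bar{e}\in E(B)$ and $\varphi(\bar{e})>\chi'(K_d)$. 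Hence no edge of $K_n[D]$ lies on $C$; applying the symmetric argument at the largest-coloured vertex of $C$ shows that no edge of $K_n[U]$ lies on $C$ either, so $C$ lies entirely inside $L(B)$. But the restriction of our orientation to $L(B)$ is exactly the Galvin orientation of $L(B)$ with respect to the bipartition $(U,D)$ of $B$ and the proper $\Delta(B)$-edge-colouring $\varphi|_B$, which is kernel-perfect by Galvin \cite{Gal}; so by Theorem \ref{kpchar} the chordless directed odd cycle $C$ has a pseudochord in $L(B)$, hence in $L(K_n)$ -- contradicting our assumption. All three conditions of Theorem \ref{kpchar} now hold, so the Galvin orientation is kernel-perfect, and with the outdegree bound it is proper.

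I expect the main obstacle to be exactly this odd-cycle condition, together with the rigidity it forces on the colouring. One needs the bands clean enough that the extreme-coloured edge of every triangle is confined to one part and the extreme-coloured edges of every long directed odd cycle are forced to be crossing edges of $B$ -- only then does the reduction to Galvin's bipartite result go through. Insisting on $\chi'(K_d)$ low colours, $\chi'(K_u)$ high colours, and $\Delta(B)=d$ middle colours, all in disjoint intervals, is precisely what pins the total at $k=\chi'(K_d)+\chi'(K_u)+d=f(\Delta+1)$.
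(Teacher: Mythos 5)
Your construction is the same as the paper's: split $V(K_{\Delta+1})$ into two near-equal parts, colour the inside-$D$ clique with an initial block of colours, the crossing edges with a middle block, and the inside-$U$ clique with a final block, and count colours by the residue of $\Delta+1$ modulo $4$ (the paper puts the larger part in $U$ rather than $D$, which is immaterial by symmetry, and it does not bother with the degenerate $\Delta\le 2$ cases that your monotonicity remark handles). The outdegree and triangle verifications also match. Where you genuinely diverge is the long-odd-cycle condition, and there your argument is actually the more careful one: the paper asserts that the orientation contains no directed odd cycles at all, but this is not quite right, since directed odd cycles can occur entirely among the crossing edges once $\min(|D|,|U|)\ge 3$ (for instance, with parts $\{a,b,c\}$ and $\{x,y,z\}$ and the standard $3$-edge-colouring of $K_{3,3}$, the edges $az,ay,ax,bx,bz$ form a directed $5$-cycle in the Galvin orientation; it has a chord, so kernel-perfectness survives, but the cycle is directed). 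Your version---locate the minimum- and maximum-coloured vertices of a putative chordless directed odd cycle, deduce that they are crossing edges, conclude the whole cycle lies in $L(G[D,U])$, and then invoke Galvin's kernel-perfectness together with the characterization of Theorem \ref{kpchar} to extract a chord or pseudochord---closes exactly the gap the paper leaves, at the modest cost of using Theorem \ref{kpchar} in both directions. Both routes prove the theorem; yours is the one I would keep for the cycle step.
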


\begin{proof}
Split the vertices of $K_{\Delta+1}$ into sets $D$ and $U$ as evenly as possible, with $|D|\leq |U|$.  The graph $K_{\Delta+1}$ is thus partitioned into two cliques $G[D]$, $G[U]$, and a complete bipartite graph $G[D, U]$. Edge-colour $K_{\Delta+1}$ as follows: use colours $1, 2, \ldots, \chi'(G[D])$ on $G[D]$, use colours $\chi'(G[D])+1,\ldots, \chi'(G[D])+\chi'(G[U, D])$ on $G[D, U]$, and use colours $\chi'(G[D])+\chi'(G[U, D])+ 1, \ldots, \chi'(G[D])+\chi'(G[U, D])+ \chi'(G[U])$ on $G[U]$.

The total number of colours used on $K_{\Delta+1}$ depends on the congruence class of $\Delta+1$ modulo 4. If the congruence class is 0, then $G[D]$ and $G[U]$ are both even cliques of size $\tfrac{\Delta+1}{2}$, so
$$\chi'(G[D])+\chi'(G[U, D])+ \chi'(G[U])= (\tfrac{\Delta+1}{2}-1)+ (\tfrac{\Delta+1}{2})+ (\tfrac{\Delta+1}{2}-1)=\tfrac{3\Delta-1}{2}=
\left\lfloor\tfrac{3\Delta}{2}\right\rfloor.$$
If the congruence class is 1, then $G[D]$ is an even clique of size $\tfrac{\Delta}{2}$ and $G[U]$ is an odd clique of size $\tfrac{\Delta}{2}+1$, so
$$\chi'(G[D])+\chi'(G[U, D])+ \chi'(G[U])= (\tfrac{\Delta}{2}-1)+ (\tfrac{\Delta}{2}+1)+ (\tfrac{\Delta}{2}+1)=\tfrac{3\Delta}{2}+1
=\left\lfloor\tfrac{3\Delta}{2}\right\rfloor+1.$$
If the congruence class is 2, then $G[D]$ and $G[U]$ are both odd cliques of size $\tfrac{\Delta+1}{2}$, so
$$\chi'(G[D])+\chi'(G[U, D])+ \chi'(G[U])= (\tfrac{\Delta+1}{2})+ (\tfrac{\Delta+1}{2})+ (\tfrac{\Delta+1}{2})=\tfrac{3\Delta+3}{2}=
\left\lfloor\tfrac{3\Delta}{2}\right\rfloor+2.$$
If the congruence class is 3, then $G[D]$ is an odd clique of size $\tfrac{\Delta}{2}$ and $G[U]$ is an even clique of size $\tfrac{\Delta}{2}+1$, so
$$\chi'(G[D])+\chi'(G[U, D])+ \chi'(G[U])= (\tfrac{\Delta}{2})+ (\tfrac{\Delta}{2}+1)+ (\tfrac{\Delta}{2})=\tfrac{3\Delta}{2}+1
=\left\lfloor\tfrac{3\Delta}{2}\right\rfloor+1.$$
In all cases, we see that $K_{\Delta+1}$ has been edge-coloured with $f(\Delta+1)$ colours.

The colours used on $G[D]$ are all low with respect to $f(\Delta+1)$, and the colours used on $G[U]$ are all high with respect to $f(\Delta+1)$, hence the the outdegree requirement is satisfied. A triangle in $G$ with at least two vertices in $D$ satisfies the condition that the lowest-coloured edge of the triangle occurs between two vertices in $D$. Any other triangle has two or more vertices in $U$ and hence satisfies the condition that the highest-coloured edge of the triangle occurs between two vertices in $U$. So, the orientation has no directed triangles. In fact, we claim it has no directed odd cycles at all. Odd cycles in $G[U]$ and $G[D]$ are obviously not directed. Moreover, any edge in $G[U,D]$ will be lower-coloured and hence point towards any edge in $G[U]$; any edge in $G[U, D]$ will be higher-coloured and hence point towards any edge in $G[D]$. So, indeed the orientation of $L(G)$ will not contain any directed odd cycles.
\end{proof}

For $K_4$, the above provides a proper Galvin orientation of $L(K_4)$ with respect to $f(4)=4$, which is best possible by Theorem \ref{cliquecounter}. Despite this, the gap is large between the bound of Theorem \ref{cliquecounter} and the best-known list-edge-colouring results for cliques. Borodin, Kostochka and Woodall \cite{BKW1} have proved that
$\chi'_l(G)\leq \lfloor\tfrac{3\Delta}{2}\rfloor$ for all graphs $G$; this bound is attained by Theorem \ref{cliquebound} when the order of the clique is divisible by 4, but otherwise the bound of Theorem \ref{cliquebound} is higher. Moreover, $\lfloor\tfrac{3\Delta}{2}\rfloor$ is far from the best known list-edge-colouring bound for cliques: H\"{a}ggkvist and Janssen proved that $\chi'_l(K_n)\leq n$, establishing the list-edge-colouring conjecture for odd cliques, and providing a bound that is only one away from the conjecture even cliques. Recently Schauz \cite{Sch} further added to this by proving the conjecture for $K_{p+1}$ where $p$ is an odd prime.

\section{Simple graphs without odd cycles of length 5 or longer}

A \emph{block} in a graph is a maximal connected subgraph that has no cut vertices (vertices whose removal disconnects the graph).

\begin{theorem}\label{blocks} \emph{(Maffray \cite{Maf})} A simple graph $G$ has no odd cycles of length 5 or more iff every block $B$ of $G$ satisfies one of the following:
\begin{enumerate}
\item  $B$ is bipartite;
\item $B=K_4$; or
\item $B$ consists of exactly $p+2$ vertices $x_1, x_2, \ldots, x_p, v, w$ where $\{x_1, \ldots, x_p\}$ is an independent set and $\{x_i, v, w\}$ induces $K_3$ for each $i\in\{1, 2, \ldots, p\}$.
\end{enumerate}
\end{theorem}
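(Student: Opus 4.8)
The plan is to argue block by block. Since every cycle of a graph is contained in a single block, $G$ has no odd cycle of length $\geq 5$ if and only if none of its blocks does, so it suffices to prove the characterization when $G$ itself is $2$-connected; blocks isomorphic to $K_1$ or $K_2$ are bipartite and fall under case 1. The reverse implication is then immediate: a bipartite block has no odd cycle at all, $K_4$ has only cycles of lengths $3$ and $4$, and in a type-3 block every cycle is a triangle on $\{x_i,v,w\}$ or a $4$-cycle $x_i,v,x_j,w$. For the forward implication, assume $G$ is $2$-connected with no odd cycle of length $\geq 5$. If $G$ is bipartite we are in case 1, so assume not; then $G$ has an odd cycle, and a shortest one has length $3$, so $G$ contains a triangle $T=\{a,b,c\}$. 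If $V(G)=T$ then $G=K_3$, a type-3 block with $p=1$; so from now on assume $|V(G)|\geq 4$.

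The key step I would establish next is that every vertex $d\in V(G)\setminus T$ is adjacent to at least two vertices of $T$. Applying the fan version of Menger's theorem in the $2$-connected graph $G$, there are two paths from $d$ to $T$ meeting only in $d$, each meeting $T$ only at its other endpoint; call these endpoints $x$ and $y$, and let $t$ be the third vertex of $T$. The union of the two paths is a path from $x$ to $y$ through $d$ with no internal vertex in $T$, of some length $\ell\geq 2$. Closing it with the edge $xy$ gives a cycle of length $\ell+1$, while closing it with the two-edge path $x,t,y$ gives a cycle of length $\ell+2$; exactly one of these is odd, and if $\ell\geq 3$ that one has length $\geq 5$, a contradiction. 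Hence $\ell=2$, so $d$ is adjacent to both $x$ and $y$.

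What then remains is a finite case analysis. Partition $V(G)\setminus T$ into the set $V_3$ of vertices adjacent to all of $a,b,c$ and the sets $V_{ab},V_{ac},V_{bc}$ of vertices adjacent to exactly the two indicated vertices; the previous step guarantees this really is a partition. I would then extract short odd cycles to pin down the structure: (i) each of $V_3,V_{ab},V_{ac},V_{bc}$ is independent, because two adjacent vertices of $V_3$ would form $K_5$ together with $T$, and two adjacent vertices of (say) $V_{ab}$ would form $K_5$ minus the two edges joining $c$ to them, and each of these graphs contains a $5$-cycle; (ii) $|V_3|\leq 1$, since two distinct vertices $d,d'$ of $V_3$ give the $5$-cycle $d,a,d',b,c$; (iii) at most one of $V_{ab},V_{ac},V_{bc}$ is nonempty, since a vertex of $V_{ab}$ and a vertex of $V_{ac}$ give a $5$-cycle whether or not they are adjacent; and (iv) $V_3=\emptyset$ as soon as one of $V_{ab},V_{ac},V_{bc}$ is nonempty, since a vertex of $V_3$ and a vertex of a nonempty $V_{ab}$ again give a $5$-cycle. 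Combining these, either all of $V_{ab},V_{ac},V_{bc}$ are empty, so $V(G)=T\cup V_3$ with $|V_3|\leq 1$ and hence $G\in\{K_3,K_4\}$; or exactly one of them, say $V_{ab}$, is nonempty and $V_3=\emptyset$, so $V(G)=\{a,b,c\}\cup V_{ab}$, the set $\{c\}\cup V_{ab}$ is independent, and each of its vertices is adjacent to exactly $a$ and $b$ --- that is, $G$ is a type-3 block with $v=a$, $w=b$, and $\{x_1,\ldots,x_p\}=\{c\}\cup V_{ab}$. I expect the main obstacle to be purely organizational: setting up this partition cleanly and checking that each forbidden configuration really contains an odd cycle of length $\geq 5$ (they all reduce to exhibiting a $5$-cycle). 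No individual step is deep.
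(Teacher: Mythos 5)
The paper states this result as Maffray's theorem and gives no proof of its own (it is imported from \cite{Maf} as a black box), so there is no in-paper argument to compare yours against; I can only assess your proof on its merits, and it is correct and complete. The reduction to $2$-connected blocks is sound (every cycle lies in a single block, and $K_1$/$K_2$ blocks are bipartite), the reverse implication is routine as you say, and the two real steps of the forward direction both hold up: the shortest odd cycle in a non-bipartite graph is chordless and hence a triangle $T$, and the fan-lemma argument correctly forces every vertex outside $T$ to have two neighbours on $T$, since the odd one of the two cycles of lengths $\ell+1$ and $\ell+2$ has length at least $5$ whenever $\ell\geq 3$. The subsequent case analysis on $V_3, V_{ab}, V_{ac}, V_{bc}$ is exhaustive and each forbidden configuration you list does contain a $5$-cycle (e.g.\ for adjacent $d,d'\in V_{ab}$ the cycle $a,c,b,d,d'$; for $d\in V_{ab}$, $d'\in V_{ac}$ the cycle $d,a,d',c,b$ or $d,d',a,c,b$ according to adjacency), and the two terminal outcomes are exactly $K_3$ or $K_4$ versus a type-$T$ block with spine $ab$ and spikes $\{c\}\cup V_{ab}$. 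The only places where a reader would want a line more detail are the unspelled adjacency subcases in (iii) and (iv), but these check out; this is essentially the standard proof of Maffray's characterization.
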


For our purposes we will call a block $B$ of the third category above a block of \emph{type $T$}. We call the edge $vw$ the \emph{spine} of such a block, and we call each vertex in $\{x_1, \ldots, x_p\}$ a \emph{spike} of the block.

\begin{theorem}\label{main} Let $G$ be a simple graph without odd cycles of length 5 or longer. Then $L(G)$ has a proper Galvin orientation with respect to $\Delta+1$, where $\Delta$ is the maximum degree of $G$.
\end{theorem}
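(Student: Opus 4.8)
The plan is to build the partition $(U,D)$, the $(\Delta+1)$-edge-colouring $\varphi$, and hence the Galvin orientation block-by-block, using Theorem \ref{blocks}. Since cut vertices only share one vertex between blocks, and a Galvin orientation's outdegree condition and kernel condition on cliques are local to each block (cliques of $L(G)$ come from stars and triangles of $G$, both of which live inside a single block), we can essentially treat the blocks independently, being slightly careful about how colours at a cut vertex interact. The key point to exploit is that with $k=\Delta+1$ we have one more colour than the minimum, which gives enough slack: a ``low colour'' is anything in $\{1,\dots,\lfloor(\Delta+2)/2\rfloor\}$ and a ``high colour'' anything in $\{\lceil(\Delta+2)/2\rceil,\dots,\Delta+1\}$, and crucially colour $1$ is low, colour $\Delta+1$ is high, and these two special colours are what let us kill directed cycles.

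First I would handle each block type. For a bipartite block $B$, put its two sides into $U$ and $D$ respectively (consistently with whatever has already been forced at a cut vertex — a bipartite block attached at a single cut vertex can always be flipped to match) and colour it properly with $\le\Delta$ colours from $\{1,\dots,\Delta\}$; by the discussion following Theorem \ref{kpchar}, the Galvin orientation restricted to $L(B)$ has outdegree $\le \Delta-1 \le \Delta$ and every clique of $L(B)$ has a sink, and there are no odd cycles. For a block $B=K_4$: this is the delicate case. Theorem \ref{cliquecounter} says $K_4$ has no proper Galvin orientation with respect to $\chi'(K_4)=3$, but with $4=\Delta+1$ colours (here $\Delta=3$ inside the block, and $K_4$ is $4$-edge-colourable) Theorem \ref{cliquebound} gives $f(4)=4$, i.e. a proper Galvin orientation of $L(K_4)$ with respect to $4$; I would invoke that construction directly, noting the partition is $|D|=|U|=2$. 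For a block $B$ of type $T$ with spine $vw$ and spikes $x_1,\dots,x_p$: the natural choice is to put $v,w$ both into $D$ (or both into $U$), put all spikes into $U$ (resp.\ $D$), colour the spine $vw$ with colour $1$, and colour the triangle edges $x_iv, x_iw$ with a proper edge-colouring of the remaining structure; since each $x_i$ has degree $2$, its two edges only need $2$ colours, and one checks the outdegree of every edge is at most $\Delta$ — the spine, coloured $1$ and between two $D$'s, contributes nothing upward and is a sink, and each spike edge has one end in $U$ and one in $D$. Because the spine is colour $1$ it is a sink in any triangle and in any cycle through it, so every $5$-cycle (or longer odd cycle) inside such a block — which must use the spine — is undirected in $L(B)$; there are no triangles in $L(B)$ other than those hit by the spine, and stars give cliques with sinks.

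Next I would argue the blocks glue together. Root the block-tree and process blocks from the root outward; when a block $B$ is reached via cut vertex $z$, the vertex $z$ already has some colours used on its edges in earlier blocks and a fixed membership in $U$ or $D$. For a bipartite block we may freely relabel its bipartition sides and permute colours so that $z$'s side matches and $z$'s already-used colours are avoided on $z$'s new edges; this is possible because $z$ has degree at most $\Delta$ in total, so across all its blocks it uses at most $\Delta$ colours, all available. For a $K_4$ or type-$T$ block attached at $z$, the construction above is symmetric enough (in the choice of which two vertices are the ``$D$-pair'' or which vertex is $v$) that we can place $z$ appropriately and then permute the colour set $\{1,\dots,\Delta+1\}$ to avoid conflicts at $z$ — the only rigidity is that colour $1$ wants to sit on the spine and colour $\Delta+1$ wants to be high, but since a type-$T$ or $K_4$ block has at most four ``structurally special'' vertices we can always route $z$ to a flexible position, or simply note that the total degree bound at $z$ leaves a free colour to swap into role $1$. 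Finally, kernel-perfectness of the whole $L(G)$ follows from Theorem \ref{kpchar} applied globally: every clique of $L(G)$ lies in a single block and has a kernel by the above; every directed odd cycle of $L(G)$ — being a cycle — lies in a $2$-connected subgraph and hence within a single block, and inside each block type we have shown no directed odd cycle exists. Hence the orientation is kernel-perfect with outdegree $\le\Delta$, i.e.\ a proper Galvin orientation with respect to $\Delta+1$.

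The main obstacle I anticipate is the gluing at cut vertices: making sure that the colour permutations and the $U/D$ flips demanded by two or more blocks sharing a vertex $z$ are simultaneously satisfiable, in particular reconciling the type-$T$ block's preference to use colour $1$ on its spine with a neighbouring block that may also want colour $1$ at $z$. I expect this is resolvable by observing $z$'s edges number at most $\Delta$ so at least one colour among $\{1,\dots,\Delta+1\}$ is unused at $z$, giving room to relabel, and by checking that each block-type construction has enough symmetry (in vertex roles and colour choices) to absorb the constraint; but it is the step that needs the most care, and a clean induction on the block-tree with a precise inductive hypothesis about ``which colours are used at the current cut vertex'' is the right framework.
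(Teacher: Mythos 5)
Your overall strategy coincides with the paper's: decompose $G$ into blocks via Theorem \ref{blocks}, process the block-tree one block at a time, and use the slack from $k=\Delta+1$ to get $\deg_B(b)+1$ available colours at the cut vertex $b$. But there is a genuine gap at exactly the step you flag as delicate, and your proposed resolution does not work. You suggest reconciling a new block's colour demands at the cut vertex $z$ by ``permuting the colour set $\{1,\dots,\Delta+1\}$'' or ``swapping a free colour into role $1$.'' The low/high status of a colour is determined by its numerical value relative to $k$, and the outdegree and triangle conditions verified for earlier blocks depend on that status. Any non-order-preserving relabelling therefore invalidates earlier blocks, and an order-preserving one cannot move an arbitrary free colour into role $1$. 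Concretely: if $z$ lies on the spine of a type-$T$ block, $z\in D$, and every low colour is already used at $z$ by previously processed blocks, then no relabelling rescues the ``spine gets colour $1$, both spine ends in $D$'' template, and that is the only type-$T$ template you describe. Similarly, invoking the Theorem \ref{cliquebound} construction for a $K_4$ block presupposes that the specific colours $1,2,3,4$ (or an interval straddling the low/high boundary of the \emph{global} $k$) are available at $z$ in the right low/high pattern, which need not hold.

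The paper closes this gap not by relabelling but by a case analysis on \emph{which kinds} of colours are available at $b$, with genuinely different constructions in each case. For $K_4$ with $b\in D$ there are three cases (three available low colours; four available high colours; otherwise $\alpha<\beta<\gamma<\tau$ with $\alpha$ low and $\gamma,\tau$ high), each with its own colouring and its own $U/D$ assignment of the three new vertices --- in particular the new vertices are not always split $2$--$2$ as in Theorem \ref{cliquebound}. For a type-$T$ block attached at a spine vertex $b\in D$ with no low colour available, the spine instead receives the \emph{highest} available colour, all other vertices go to $U$, and the second edge at each spike is coloured with a shifted (still high) colour so that the middle colour of every triangle sits on a $U$--$D$ edge. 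Designing these alternative templates, and verifying the outdegree and triangle conditions for each, is the substantive content of the proof; it is precisely what your proposal leaves unresolved.
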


\begin{proof} Let $k=\Delta+1$. We use the characterization of Theorem \ref{blocks} and define an appropriate $k$-edge-colouring $\varphi$ of $G$, and appropriate partition $U, D$ of $V(G)$, one block at a time. At each step we choose a block $B$ of $G$  which has at most one vertex in common with our previously chosen blocks. We assume that $b\in V(B)$  has already been assigned to one of $U, D$ (if not, i.e., for the first block, make this assignment arbitrarily), and assume that all edges incident to $b$ but not in $B$ have already been coloured (if not, it would only help us to have more colours available). The number of colours available for edges incident to $b$ is at least $deg_B(b)+1$, since $k=\Delta+1$. 

\underline{If $B$ is bipartite:} Fix a bipartition $B_D, B_U$ of $B$ with $b\in B_D$ iff $b\in D$, and put all vertices of $B_D$ in $D$ and all vertices of $B_U$ in $U$. Assign a proper $k$-edge-colouring of $B$ arbitrarily, except for the restriction that the colouring must be proper at $b$.  Since every edge in $B$ goes between $D$ and $U$, and since $B$ has no triangles,  there is nothing to check here.

\underline{If $B=K_4$:} Suppose first that $b\in D$ and that there are three (or more) available low colours at $b$. Then we can 3-edge-colour $B$ with these three colours, and add every vertex in $B$ to $D$. See the leftmost graph in Figure \ref{K4case}. Analogously, if $b\in U$ and there are three (or more) available high colours at $b$, we can 3-edge-colour $B$ with these three colours and add every vertex to $U$.

Suppose now that $b\in D$ and there are (at least) four available high colours at $b$. Let $\alpha< \beta<\gamma<\tau$ be such colours. Colour the edges incident to $b$ with $\alpha, \beta, \gamma$, colour the edge between the $\alpha$-edge and the $\gamma$-edge with $\tau$, and colour the other two edges with $\gamma$ and $\alpha$. Assign all three vertices to $U$. See the middle graph in Figure \ref{K4case}. The only edges with both ends in the same partite set have both ends in $U$, and these edges all have high colours. Every triangle in $B$ contains either the $\tau$-edge or the $\beta$-edge. The colour $\tau$ is the highest colour used in $B$ and is between two vertices in the same partite set. The two triangles containing $\beta$ have other colours $\alpha$ and $\gamma$, so $\beta$ is neither the highest or lowest-coloured edge in these triangles, and it has ends in different partite sets. Analogously, if $b\in U$ and there are (at least) four available low colours $\alpha>\beta>\gamma>\tau$ at $b$, then we satisfy our required properties by giving the same edge-colouring to $B$ as just described, but assigning all three vertices to $D$.

\begin{figure}
\centering
\includegraphics[width=\textwidth]{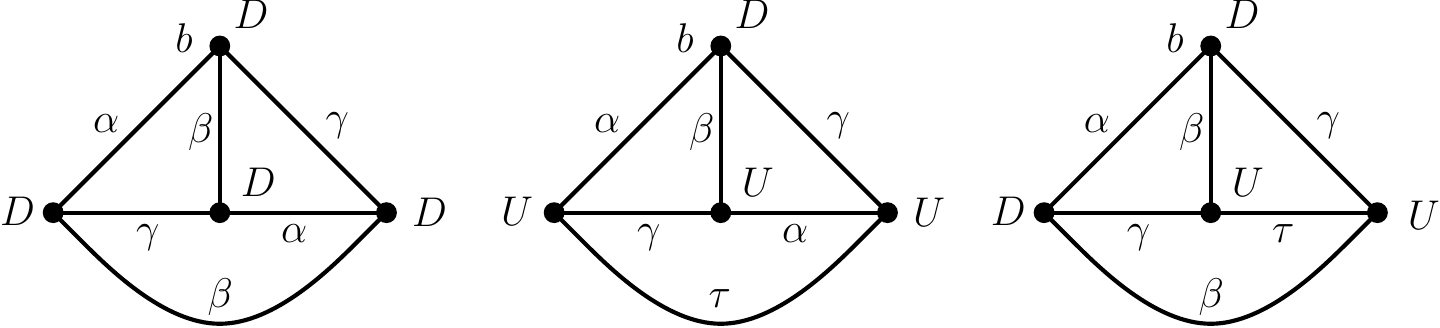}
\caption{The $B=K_4$ case.}
\label{K4case}
\end{figure}

We may now assume that if $b\in D$, there are at most 3 available high colours at $b$, and at most two available low colours at $b$. This means that we have $\alpha<\beta<\gamma<\tau$  available, and $\alpha$ is low and $\gamma, \tau$ are high.  Colour the edges incident to $b$ with $\alpha, \beta, \gamma$, colour the edge between the $\beta$-edge and $\gamma$-edge with $\tau$, and colour the other two edges with $\beta$ and $\gamma$.  Assign both ends of the $\tau$-edge to $U$, and assign the other vertex to $D$. See the rightmost graph in Figure \ref{K4case}. The only edges between two ends in the same partite set are the $\tau$-edge (which is high and between two $U$'s), and the $\alpha$-edge (which is low and between to $D$'s). Every triangle in $B$ contains either the $\tau$-edge or the $\alpha$-edge. The colour $\tau$ is the highest colour used in $B$ and is between two vertices in the same partite set; the colour $\alpha$ is the lowest colour used in $B$ and is between two vertices in the same partite set. Analogously, if $b\in U$ then we may now assume that we have $\alpha>\beta>\gamma>\tau$ available at $b$ where $\alpha$ is high, and $\gamma, \tau$ are low. We again satisfy our required properties by giving the same edge-colouring as above, but assigning both ends of the $\tau$ edge to $D$ assigning the other vertex to $U$.

\underline{If $B$ is of type $T$ and $b$ is a spike:} Assume first that $b\in D$, and let $\alpha<\beta<\gamma$ be three available colours at $b$. Colour the two edges incident to $b$ with $\alpha, \beta$, colour the spine with $k$, assign its two ends to $U$, and assign all other spikes to $D$. See the leftmost graph in Figure \ref{TypeTcase}. Regardless of how the remaining edges of $B$ are (properly) coloured, the only edge between two vertices in the same partite set is our $k$-edge, and it is between two $U$ vertices. Moreover, every triangle in $B$ contains this $k$-edge between two $U$ vertices. Analogously, if $b\in U$ and $\alpha>\beta>\gamma$ are three available colours at $b$, then we satisfy our required properties by colouring the two edges incident to $b$ with colours $\alpha, \beta$, colouring the spine with $1$, assigning both ends of the spine to $D$, and assigning all other spikes to $U$.

\begin{figure}
\centering
\includegraphics[width=\textwidth]{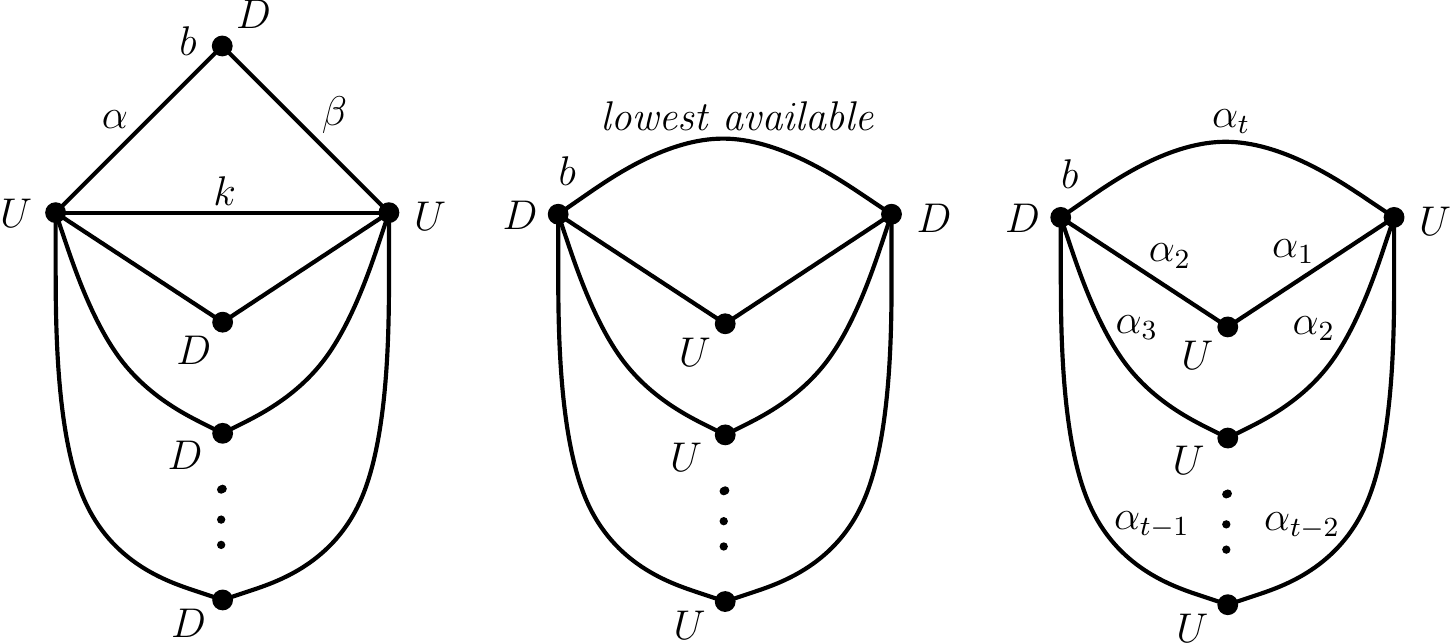}
\caption{The case when $B$ is of type $T$.}
\label{TypeTcase}
\end{figure}

\underline{If $B$ is of type $T$ and $b$ is not a spike:} Suppose first that $b\in D$ and that a low colour is available at $b$. Then, colour the spine with the lowest such colour available, colour the other edges incident to $b$ arbitrarily with other available colours, assign all the spikes to $U$ and assign the other end of the spine to $D$. Colour the remaining edges either by permuting the colours used on the non-spine edges incident to $b$, or, if there is only one spike, use an additional available colour at $b$ on the remaining edge (such a colour exists because $k>\Delta$). See the middle graph in Figure \ref{TypeTcase}. The only edge with both ends in the same partite set is the spine, and it is a low colour between two $D$'s. Every triangle in $B$ contains this spine, which is the lowest colour used on $B$ and is between two vertices in the same partite set. Analogously, if $b\in U$ and a high colour is available at $b$, then we satisfy our required properties by using the highest such colour on the spine, adding the other end of the spine to $U$ and all the spikes to $D$, and colouring the other edges of $b$ arbitrarily (but properly) using the other available colours at $b$.

Now suppose that $b\in D$ and that no low colours are available at $b$. Suppose that $deg_B(b)=t-1$ and among the colours available at $b$ are $\alpha_1<\alpha_2 <\cdots <\alpha_t$. Add all the vertices in $B$ (besides $b$) to $U$. Colour the spine with $\alpha_t$, and colour the other edges incident to $b$ arbitrarily using only the colours $\alpha_2 < \ldots <\alpha_{t-1}$ (note that $\alpha_1$ is not needed). For the remaining edge in $B$ incident to colour $\alpha_i$ (for $i\in\{2, 3, \ldots, t-1\}$), colour it with colour $\alpha_{i-1}$. See the rightmost graph in Figure \ref{TypeTcase}. The only edges between two vertices in the same partite set are these last edges we coloured, which are all high colours between two $U$'s. In every triangle of $B$, the middle colour of the triangle is on the non-spine edge incident to $b$, and this edge has ends in different partite sets. Analogously, if $b\in U$ and available colours at $b$ include $\alpha_1>\alpha_2 \cdots >\alpha_t$, none of which are high, then we satisfy our required properties by giving the same colouring we just described, but adding all the vertices in $B$ (besides $b$) to $D$.
\end{proof}

If $G$ is a simple graph without odd cycles of length 5 or longer, then $L(G)$ is known to be perfect (by Maffray \cite{Maf}), that is $\chi'(G)=\chi(L(G))=\omega(L(G))$, where $\omega$ denotes the size of the largest clique. Furthermore, $\omega(L(G))=\Delta(G)$ unless the largest clique in $L(G)$ is a triangle which corresponds to a triangle in $G$ and $\omega(L(G))=3>\Delta(G)=2$. If $G$ is connected, then this latter situation means that $G=K_3$. So, connected simple graphs with no odd cycles of length 5 or longer all have chromatic index $\Delta$, with the exception of $K_3$. Peterson and Woodall \cite{PW}\cite{PWe} showed that such graphs are $\Delta$-list-edge-colourable.

Since $L(K_4)$ has no proper Galvin orientation with respect to $\Delta=3$ (by Theorem \ref{cliquecounter}), we know that Theorem \ref{main} is best possible in general. In fact, Theorem \ref{main} is best possible even when $G$ is assumed to be $K_4$-free.

\begin{theorem}\label{K4freecounterthm} Let $G$ be the graph depicted in Figure \ref{K4freecounter}, which is simple and contains $K_3$ but has no odd cycles of length 5 or longer, and is $K_4$-free. $L(G)$ has no proper Galvin orientation with respect to $\Delta(G)=4$.
\end{theorem}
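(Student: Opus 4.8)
The plan is to argue by contradiction: suppose $L(G)$ admits a proper Galvin orientation with respect to $k=4$, coming from a $4$-edge-colouring $\varphi$ and a partition $(U,D)$ of $V(G)$. First I would pin down the structure of $G$ from Figure~\ref{K4freecounter}. Since $G$ is $K_4$-free, contains a triangle, and has no odd cycle of length $5$ or more, by Theorem~\ref{blocks} every block is bipartite or of type $T$; the intended example should be built so that a type-$T$ block (with spine $vw$ and several spikes), or a small cluster of triangles sharing edges/vertices, forces a contradiction. I would begin by recording the outdegree constraints in the critical regime $k=\Delta=4$: as the excerpt already observes, when both ends of an edge $e$ have degree $\Delta=4$, an edge between two $U$-vertices is forced to carry a high colour ($3$ or $4$) and an edge between two $D$-vertices is forced to carry a low colour ($1$ or $2$). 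So the first substantive step is to identify, in $G$, the edges both of whose endpoints have degree $4$ — these are the ``rigid'' edges whose colour class is constrained by the $U/D$ assignment.

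Next I would analyze the triangles. For a triangle with edges $e_1,e_2,e_3$ and $\varphi(e_1)<\varphi(e_2)<\varphi(e_3)$, the excerpt notes that the corresponding triangle in $L(G)$ has a kernel iff it is transitive, which happens iff the ends of $e_2$ lie in different partite sets, equivalently exactly one of $e_1,e_3$ has its two ends in the same partite set. I would use this together with the rigidity observations to show that, for each triangle of $G$ (in particular each triangle $\{x_i,v,w\}$ of a type-$T$ block, and the triangle contained in the $K_4$-free gadget of Figure~\ref{K4freecounter}), the $U/D$ pattern on its three vertices is heavily constrained. In particular, if all three vertices of a triangle are in $D$ (resp.\ $U$), the highest-coloured edge of that triangle would have both ends in $D$ but be a high colour (contradiction with rigidity, if its endpoints have degree $4$), so each such triangle must be ``split'' across $U$ and $D$; and then the middle edge $e_2$ must be the cross edge. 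Tracking which edge of each triangle is the spine versus a spike edge, and chasing the forced high/low colours around, should produce the contradiction.

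The main obstacle I expect is the case analysis needed to show that no assignment of $(U,D)$ to the (presumably five or six) vertices of $G$ survives all the triangle-transitivity and outdegree conditions simultaneously — this is where the specific gadget in Figure~\ref{K4freecounter} matters. Concretely, I anticipate the graph is something like two triangles sharing an edge, or a ``bowtie''-type configuration with a pendant structure raising the relevant degrees to $4$, engineered so that two triangles share the constrained edge in such a way that one triangle forces that edge to be low (both ends in $D$) while the other forces it to be high (both ends in $U$), or forces a vertex simultaneously into $U$ and $D$. I would enumerate the $O(1)$ many essentially-different $U/D$ patterns on the vertices, and in each one exhibit either a triangle of $L(G)$ that is directed (no kernel) or an edge violating $d^+(e)\le 3$. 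Since the graph is small and fixed, all the per-case computations are routine; the work is in organizing the enumeration so it is clearly exhaustive, and in first establishing the degree-$4$ rigidity facts that prune most of the cases immediately.
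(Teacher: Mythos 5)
Your toolkit is the right one: the degree-$\Delta$ rigidity (for $k=\Delta=4$, an edge between two degree-$4$ vertices both in $U$ must carry colour $3$ or $4$, and between two in $D$ colour $1$ or $2$) and the triangle-transitivity criterion are exactly the two facts the paper's proof runs on, and your guess about the gadget is close to the truth --- the relevant part of Figure \ref{K4freecounter} is a pair of triangles $v_1v_2v_3$ and $v_2v_3v_4$ glued along the edge $v_2v_3$ (with a symmetric copy through $v_5$), embedded so that $v_1,\dots,v_9$ all have degree $4$.

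There is, however, a genuine gap in how you propose to close the argument. Enumerating the $U/D$ patterns is not sufficient: for a fixed pattern, whether a directed triangle or an outdegree violation occurs depends on the $4$-edge-colouring $\varphi$, which the adversary also chooses, so each case of your enumeration would itself require ruling out every proper $4$-edge-colouring, and your proposal gives no mechanism for pinning $\varphi$ down. The missing lever is the pigeonhole observation that, because $k=\Delta=4$, every colour --- in particular colour $4$ --- is present at every degree-$4$ vertex. This is what seeds and propagates the paper's forcing chain: starting from (WLOG) $v_1\in D$, the $4$-edge at $v_1$ forces its other end $v_2$ into $U$; then $v_3\in U$ would force $\varphi(v_2v_3)=3$, making the middle edge of the triangle $v_1v_2v_3$ lie inside $U$ (a directed triangle), so $v_3\in D$; the $4$-edge at $v_3$ then forces some neighbour $v_4\in U$, whence $\varphi(v_2v_4)=3$ and the triangle $v_2v_3v_4$ again has its middle edge inside $U$, a contradiction. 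Without the ``every colour appears at every degree-$\Delta$ vertex'' step, your case analysis has no way to locate colour $4$ and does not terminate in a contradiction; with it, no enumeration over all $U/D$ patterns is needed at all.
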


\begin{figure}
\centering
\includegraphics[width=13.5cm]{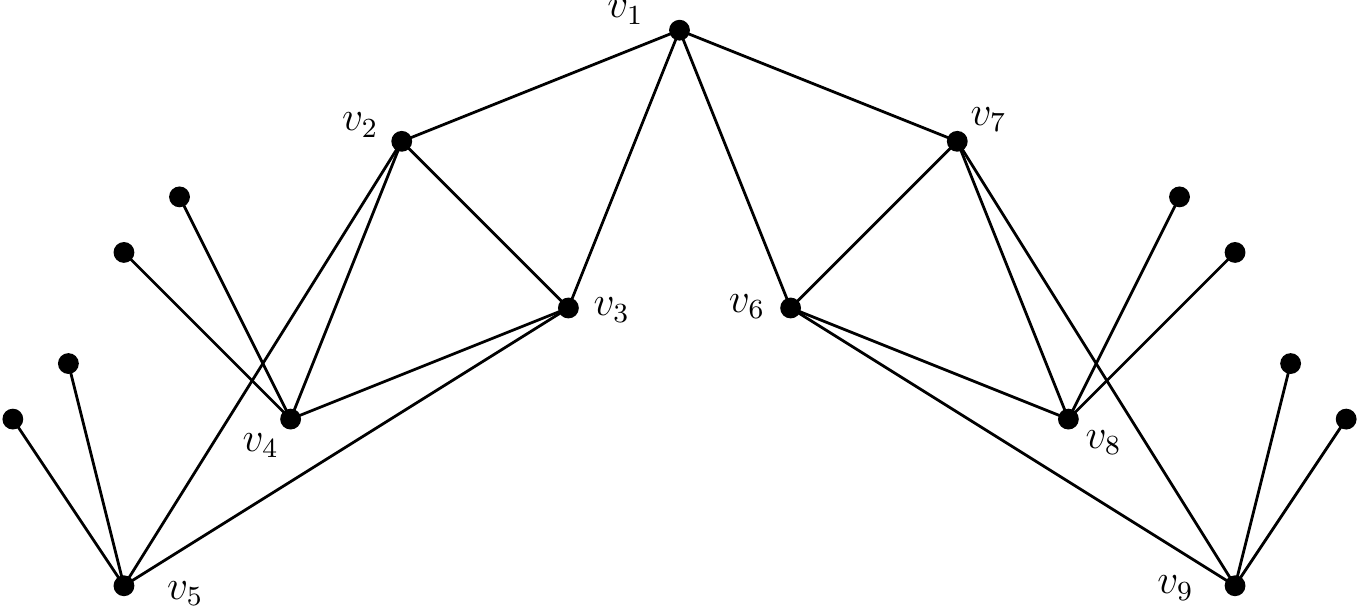}
\caption{The graph $G$ in Theorem \ref{K4freecounterthm}.}
\label{K4freecounter}
\end{figure}

\begin{proof} Suppose, for a contradiction, that $L(K_n)$ does have a proper Galvin orientation with respect to $4$, say with $\varphi, D, U$. Note that $v_1, \ldots, v_9$ are all of degree 4, so all edges with both ends in this set must follow the rule that a colour between two $U$'s is high (3 or 4), and a colour between two $D$'s is low (1 or 2).

We suppose that $v_1\in D$ -- if $v_1\in U$, the argument follows symmetrically. Since $v_1$ has degree $4$, the colour $4$ must be incident to $v_1$; without loss of generality suppose that $v_1v_2$ has colour 4. Then $v_2\in U$. We claim that $v_3\in D$. If not, then $v_2v_3$ must be a high colour (hence 3). Regardless of how the edge $v_1v_3$ is coloured (either 1 or 2), neither the highest nor lowest colour in the triangle $v_1v_2v_3$ is between two $U$'s or two $D$'s, meaning that $v_1v_2v_3$  gives a directed triangle in $L(G)$. Hence $v_3\in D$.

Since $\varphi$ is a 4-edge-colouring, either the edge $v_3v_4$ or the edge $v_3v_5$ must have colour 4; without loss of generality we assume that $v_3v_4$ has colour 4. This implies that $v_4\in U$. So $v_2v_4$ must have a high colour, hence 3. However now, regardless of how $v_2v_3$ is coloured (either colour 1 or 2), neither the highest nor lowest colour in the triangle $v_2v_3v_4$ is between two $U$'s or two $D$'s, meaning that $v_2v_3v_4$  gives a directed triangle in $L(G).$
\end{proof}

\end{document}